\xpatchcmd{\paragraph}{\normalfont}{{\normalfont\bfseries}}{}{}
\xpatchcmd{\subparagraph}{\normalfont}{{\normalfont\itshape}}{}{}
\newtheorem{theorem}{Theorem}[section]
\newtheorem{prop}[theorem]{Proposition}
\newtheorem{coro}{Corollary}[theorem]
\newtheorem{lem}[theorem]{Lemma}
\theoremstyle{definition}
\newtheorem{definition}[theorem]{Definition}
\theoremstyle{remark}
\numberwithin{equation}{section}
\newcommand{\reft}[1]{Theorem~\ref{#1}}
\newcommand{\refT}[1]{Table~\ref{#1}}
\newcommand{\refp}[1]{Proposition~\ref{#1}}
\newcommand{\refd}[1]{Definition~\ref{#1}}
\newcommand{\refl}[1]{Lemma~\ref{#1}}
\newcommand{\boldify}[1]{\boldmath{#1}\unboldmath}
\newlist{ByCase}{enumerate}{3}
\setlist[ByCase,1]{label=\textbf{Case \arabic{*}}}
\setlist[ByCase,2]{label=\textbf{Subcase \arabic{ByCasei}.\arabic*}}
\setlist[ByCase,3]{label=\textbf{Subsubcase \arabic{ByCasei}.\arabic{ByCaseii}.\arabic*}}
\newcommand\N{\mathbb{N}}
\newcommand\Z{\mathbb{Z}}
\newcommand\Q{\mathbb{Q}}
\newcommand\R{\mathbb{R}}
\newcommand\F{\mathcal{F}}
\newcommand\G{\tilde{\F}}
\newcommand{\ab}[1]{{#1}_{(a,b)}}
\newcommand{\tab}[1]{{\overline{#1}}_{(a,b)}}
\newcommand\s{\ab s}
\newcommand\su{\ab u}
\newcommand\sv{\ab v}
\newcommand\tu{\tab u}
\newcommand\tv{\tab v}
\newcommand\tw{\tab w}
\newcommand\Fab{\ab \F}
\newcommand\dab{\ab d}
\newcommand\phiab{\ab \phi}
\newcommand\si[1]{{\s}_{#1}}
\newcommand\sui[1]{{\su}_{#1}}
\newcommand\svi[1]{{\sv}_{#1}}
\newcommand\tui[1]{{{\tu}}_{#1}}
\newcommand\tvi[1]{{{\tv}}_{#1}}
\newcommand\twi[1]{{{\tw}}_{#1}}
\author{Laurent Fallot}
\address{EPOC - UMR CNRS 5805 - Equipe PROMESS, Université de Bordeaux - Bordeaux-INP, Avenue des Facultés, CS60099, 33400 Talence, France}
\email{Laurent.Fallot@bordeaux-inp.fr}
\title{The monoid of numbers of the form $1<a^q/b^p<a$.}
\date{\today}
\begin{document}
\begin{abstract}
This paper is a study of the set of rational numbers of the form $1<a^q/b^p<a$ with $a$ and $b$ co-prime integers. 
The set $\Fab$ of these numbers, with an appropriate binary law, is a monoid isomorphic to $(\N,+,0)$. 
We identify the sequences of minimum and maximum record holders in $\Fab$ and prove that the first one converges to 1 while the second one converges to $a$.
We conclude that $\Fab$ is dense in the set of the real numbers comprise between 1 and a.
\end{abstract}

\subjclass[2020]{11B05,11B34,11B83}
\keywords{sequence of rational numbers, rational numbers, record holders, subset dense, Collatz problem}
\maketitle

\section{Introduction}
Working on Collatz problem also known as the 3x+1 problem or Syracuse problem and many more, we initiate a study of numbers of the form $2^q/3^p$ where $p$ is any positive integer and $q$ is the smallest integer such that $2^q/3^p>1$.
This study gave some properties that are generalizable to numbers of the form $a^q/b^p$ where $1<a<b$ are co-prime integers. 
We introduce in this paper, the generalization of these results.

We propose a study of numbers of the form $\frac{a^q}{b^p}$ where $1<a<b$ are co-prime integers, $p\in\N$ and $q\in\N$ is such that $1\leq\frac{a^q}{b^p}<a$. Let $\Fab$ denotes the set of these numbers.

$\Fab$ is clearly countable. 
Moreover, the function $\phiab$ that naturally maps $\N$ to $\Fab$ permits to define an unconventional binary law on $\Fab$ making of $\Fab$ a monoid inferring that $\phiab: \N \rightarrow \Fab$ is a monoid isomorphism. 

Later, we define two sequences of $\Fab$, one strongly depending on each other. 
Each of these sequences enumerates the record holders of $\Fab$ of one type: minimum or maximum. 
Despite an apparent irregularity of $\Fab$, the definition of these sequences is global and does not depend on local properties. 
One of this sequence converges to 1 while the other one converges to $a$.

By the use of the sequence containing the minimum record holder of $\Fab$, we conclude that $\Fab$ is dense in $[1,a]_\R$. 
Moreover, the union of the $2^k\Fab$ on $k\in\Z$ admits $\R^+$ as its closure.

\section{Context}
The Collatz problem is based on a function whose definition is understandable to everyone.
This problem was initiate during the 30's and is not solved till today.
Many people, researchers or not, from different backgrounds have tried to reformulate the problem in their own jargon, but none has yet solved this conjecture.

We do not intend to solve it in this paper. 
We just use it as the context of this work.
A starting point of this function may be Wikipedia \cite{Wikipedia2023}.
For more references on this problem, we cite the overview of Lagarias \cite{Lagarias2021} which is regularly updated.

There are several definitions of this function. 
Each of these definitions gives a problem similar to the initial one.

For the purpose of this presentation of this work context, we use the compressed form of the function.
It may be defined as $T: \N\rightarrow\N$ with $T(n) = n/2$ if $n$ is even, else $T(n)=(3n+1)/2$.
Collatz problem claims that for any $n\in\N-\{0\}$ there exist $k\in\N$ such that $T^k(n) = 1$.

Terras \cite{Terras_1976} presents some results on the weak Collatz conjecture which consists in proving that for any $n\in\N$, there is a $k\in\N$ such that $T^k(n)<n$.
He particularly defines the stopping time of $n$ as the lowest $k$ such that $T^k(n)<n$.

Silva\cite{Silva1999} for his part, claims that for any $n<2^k$ and any $a\in\N$, $T^k(2^{k}a + n) = 3^p a + T^k(n)$ where $p$ is the number of times the odd part of $T$ is applied.
From this, without going into details, it is possible to prove that if $n$ has a stopping time of $k$, then for any $a\in\N$, $2^k a + n$ has a stopping time lower than or equal to $k$. Silva's tests on the stopping time of $2^k a + n$ do not show any $2^k a + n$ having a stopping time strictly lower than $k$. All of them accept $k$ as a stopping time as Silva claims in Proposition 6 of its paper\cite{Silva1999}.

Today, we do not have any proof that  $2^{k}a + n$ has exactly $k$ as its stopping time.
Trying to build its proof by contradiction leads to prove that $T^k(n)<n$ as soon as $2^k/3^p > 1$ which also seems to be verified by tests. 
An attempt to prove this new proposition by the use of the remainder representation introduced by Terras \cite{Terras_1976} quickly leads to have to prove that $2^k/3^p+1/3^p>2^{k-p}/ 3^p + 1$.

From these problems to be solved born an interest into getting a better knowledge of the numbers of the form $1<2^q/3^p<2$.
The study of these numbers gives some interesting properties.
None of these properties use any condition but $1<2<3$ and 2 and 3 are co-primes.

Below, we propose a generalization of these properties to numbers of the form $1<a^q/b^p<a$ where $a$ and $b$ are two co-prime integers satisfying $1<a<b$.

\section{Conventions and quick properties}
\label{sec:definitions}
Before we start our study, let us agree on some notations and definitions.
Then, we introduce some quick properties easily obtained from definitions.

The writing \boldify {$\lceil x \rceil$} denotes the ceiling function applied to $x\in\R$.
That is $\lceil x \rceil$ is the smallest integer greater than or equal to $x$. 
Thus, $\lceil x \rceil$ is the only integer $n$ verifying $x \leq n < x + 1$.

By the same way, \boldify {$\lfloor x \rfloor$} denotes the floor function applied to $x\in\R$.
That is $\lfloor x \rfloor$ is the greatest integer lower than or equal to $x$. 
So, $\lfloor x \rfloor$ is the only integer $n$ verifying $x - 1 < n \leq x$.

Let $a$ and $b$ be two co-prime positive integers such that $1<a<b$.
We are on the way to study rational numbers of the form $\frac{a^q}{b^p}$ where $p$ is any positive integer and $q$ is the smallest positive integer such that $1\leq \frac{a^q}{b^p}$. 
That is $q = \lceil p\log(b)/\log(a)\rceil$ where $\log$ denotes the natural logarithm function.
The set of these rational numbers will be denoted by \boldify {$\Fab$}.

The definition of such rational numbers infers that given $a$, $b$ and $p$, $q$ is unique.
More, to obtain $1\leq \frac{a^q}{b^p}$ we need that $p\leq q$ because $a<b$.
In summary, $q-p$ is a positive or null integer depending only on $a$, $b$ and $p$.
Let us use the notation \boldify {$\dab(p)$} for this difference. 
Thus, for any $f\in\Fab$, there exist a unique $p$ such that $f = \frac{a^{p+\dab(p)}}{b^p}$ by the definition of $\Fab$ inferring that $\dab(p)$ is also unique.

Finally, we define the function \boldify {$\phiab$} as follow.
\begin{equation*}
\begin{array}{lccl}
\phiab\colon & \N & \rightarrow & \Fab\\
& p & \to & \phiab(p) = \frac{a^{p+\dab(p)}}{b^p}
\end{array}
\end{equation*}

Let us claim now some quick properties of $\Fab$ and $\phiab$.
\begin{prop}
\label{prop:1_the_only_integer}
Given two co-prime positive integers $1<a<b$, the only integer that $\Fab$ contains is $1$.
\end{prop}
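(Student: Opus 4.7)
The plan is to exploit coprimality of $a$ and $b$ together with the fact that every element of $\Fab$ already lies in $[1,a)$.

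Suppose $f\in\Fab$ is an integer $n$. By the definition of $\Fab$, there exists $p\in\N$ with $f = a^{p+\dab(p)}/b^p$, and $1\leq f < a$, hence $1\leq n\leq a-1$. Clearing denominators gives
\begin{equation*}
a^{p+\dab(p)} = n\, b^p.
\end{equation*}
Since $\gcd(a,b)=1$, we have $\gcd(a^{p+\dab(p)},b^p)=1$, so the divisibility $b^p \mid a^{p+\dab(p)}$ forces $b^p = 1$. Because $b>1$, this means $p=0$.

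It then remains to check that $p=0$ yields $f=1$. By the formula $\dab(p) = \lceil p\log(b)/\log(a)\rceil$, we get $\dab(0)=0$, hence $f = a^{0}/b^{0} = 1$. Conversely, $1 = \phiab(0)\in\Fab$, so $1$ is indeed attained.

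There is essentially no obstacle here: the only subtle point is to make sure that the definition of $\Fab$ really permits $p=0$ (so that $1$ is in $\Fab$ at all), while excluding integer values $n\geq 2$ because any such $n$ would require $p\geq 1$ and therefore $b^p\mid a^{p+\dab(p)}$ with $b^p>1$, contradicting $\gcd(a,b)=1$.
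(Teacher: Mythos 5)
Your proposal is correct and follows essentially the same route as the paper's own proof: coprimality of $a$ and $b$ forces $b^p\mid a^{p+\dab(p)}$ to imply $b^p=1$, hence $p=0$ and $f=1$, while $\phiab(0)=1$ shows $1$ is attained. No gaps.
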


\begin{proof}
Note first that $\phiab(0) = \frac{a^{0+\dab(0)}}{b^0} = a^{\dab(0)}$. To have $1\leq \phiab(0) < a$, we need that $\dab(0) = 0$. Thus, $\phiab(0) = 1$ and $1\in \Fab$.

Now, for any rational number $r$ of the form $\frac{m}{n}$, the assertion $r\in\N$ infers that $m$ is a multiple of $n$.
Take any $f\in\Fab$. The number $f$ is necessarily of the form $\frac{a^{p+\dab(p)}}{b^p}$ by the definition of $\Fab$. 
As $a$ and $b$ are co-prime, the only possible case such that $b^p$ divides $a^{p+\dab(p)}$ is $b^p=1$. 
In such a case, $p=0$ and $\dab(p)=0$. 
As a consequence, $f = \phiab(0) = 1$.
So, $\Fab$ cannot contain any positive integer other than 1.
\end{proof}

Let us now state that any member of $\Fab$ has a unique representation of the form $\frac{a^{p+\dab(p)}}{b^p}$.

\begin{prop}
\label{prop:unique_representation}
Given two co-prime integers $a$ and $b$ verifying that $1<a<b$ and two positive integers $p_1$ and $p_2$, $\phiab(p_1)=\phiab(p_2)$ if and only if $p_1=p_2$.
\end{prop}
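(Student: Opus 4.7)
The ``if'' direction is immediate since $\phiab$ is a function. For the nontrivial direction, my plan is to unpack the defining formula $\phiab(p) = a^{p+\dab(p)}/b^p$, cross-multiply, and exploit coprimality of $a$ and $b$.

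Concretely, suppose $\phiab(p_1) = \phiab(p_2)$. Then
\begin{equation*}
a^{p_1+\dab(p_1)}\, b^{p_2} \;=\; a^{p_2+\dab(p_2)}\, b^{p_1}.
\end{equation*}
Without loss of generality assume $p_1 \geq p_2$. Dividing by $b^{p_2}$ yields
\begin{equation*}
a^{p_1+\dab(p_1)} \;=\; a^{p_2+\dab(p_2)}\, b^{p_1-p_2}.
\end{equation*}
If $p_1 > p_2$, then $b^{p_1-p_2} > 1$, so it has a prime divisor $r$, which must also divide the left-hand side $a^{p_1+\dab(p_1)}$, hence $r\mid a$. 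This contradicts $\gcd(a,b) = 1$. Therefore $p_1 = p_2$.

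No real obstacle arises here; the whole content of the statement is that distinct exponents produce distinct reduced fractions, and this is immediate from coprimality via the fundamental theorem of arithmetic. The only minor care is to note that the proof does not need to separately show $\dab(p_1) = \dab(p_2)$, since once $p_1 = p_2$ is established the equality of the $\dab$-values follows tautologically from $\dab$ being a function of $p$.
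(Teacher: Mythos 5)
Your proof is correct and follows essentially the same route as the paper: both unpack $\phiab(p)=a^{p+\dab(p)}/b^p$, compare the two fractions (you by cross-multiplying, the paper by dividing one by the other), and invoke coprimality of $a$ and $b$ to force $p_1=p_2$. The only cosmetic difference is that the paper equates exponents of the reduced quotient while you extract a prime divisor of $b^{p_1-p_2}$; the substance is identical.
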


\begin{proof}
On the one hand, the fact that $p_1=p_2$ is sufficient to immediately obtain $\phiab(p_1)=\phiab(p_2)$ from the definitions of $\phiab$ and $\dab$. 
Indeed, assume that $p_1=p_2$. The uniqueness of $\dab(p)$ for a given $p$ implies that $\dab(p_1)=\dab(p_2)$. 
Thus, $\frac{a^{p_2+\dab(p_2)}}{b^{p_2}}=\frac{a^{p_1+\dab(p_1)}}{b^{p_1}}$ and $\phiab(p_1)=\phiab(p_2)$.

On the other hand, to justify that $p_1=p_2$ is necessary for $\phiab(p_1)=\phiab(p_2)$, let us reason by contradiction. 

Suppose that given two positive integers $p_1$ and $p_2$ such that $p_1\neq p_2$ we have $\phiab(p_1)=\phiab(p_2)$. 
We can assume that $p_2<p_1$ without any loss of generalities.
By hypothesis, we have $\phiab(p_1)=\phiab(p_2)$.
This reads $\frac{a^{p_1+\dab(p_1)}}{b^{p_1}}=\frac{a^{p_2+\dab(p_2)}}{b^{p_2}}$.
Thus $\frac{a^{p_1+\dab(p_1)}}{b^{p_1}}/\frac{a^{p_2+\dab(p_2)}}{b^{p_2}} = 1$.
A factorization of this equation gives $\frac{a^{p_1-p_2+\dab(p_1)-\dab(p_2)}}{b^{p_1-p_2}} = 1$.
Because $a$ and $b$ are co-prime, we need to have $p_1-p_2+\dab(p_1)-\dab(p_2) = 0$ and $p_1-p_2=0$.
So $p_1=p_2$ which contradicts the hypothesis that they are different.
In conclusion, to get $\frac{a^{p_1+\dab(p_1)}}{b^{p_1}}=\frac{a^{p_2+\dab(p_2)}}{b^{p_2}}$ we must have $p_1=p_2$ and \refp{prop:unique_representation} holds.
\end{proof}

\section{The function $\phiab$ is an isomorphism of monoids.}
\label{sec:phi_isomorphism}
In this section, we first establish that $\phiab$ is a bijection. 
Then, we define an operation on $\Fab$ induced by $\phiab$ from the addition on $\N$. 
After a more precise description of this operation, we conclude that $\Fab$ is a commutative monoid.
Finally, we state that $\phiab: \N\rightarrow\Fab$ is a monoid isomorphism.

We claim the following quick theorem.
\begin{theorem}
\label{thm:F_is_countable}
Whatever the two co-prime integers $1<a<b$ are, the set $\Fab=\left\lbrace \phiab(p) = \frac{a^{p+\dab(p)}}{b^p},~ p\in\N\right\rbrace$ where $\dab(p)$ is the smallest integer such that $1<\frac{a^{p+\dab(p)}}{b^p}$ is a countable set.
\end{theorem}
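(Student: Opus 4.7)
The plan is to exhibit an explicit bijection between $\N$ and $\Fab$, namely the map $\phiab$ itself, and then invoke the standard fact that a set in bijection with $\N$ is countable.

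First I would observe that by the very definition of $\Fab$ as the image $\{\phiab(p) : p\in\N\}$, the map $\phiab\colon \N\to \Fab$ is automatically surjective. Every element of $\Fab$ is of the form $\frac{a^{p+\dab(p)}}{b^p}$ for some $p\in\N$, and this is precisely $\phiab(p)$.

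Next I would appeal to \refp{prop:unique_representation}, which has just been established and which states exactly that $\phiab(p_1) = \phiab(p_2)$ forces $p_1 = p_2$. This gives injectivity of $\phiab$. Combined with surjectivity, $\phiab$ is a bijection from $\N$ onto $\Fab$.

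Finally I would conclude that since $\Fab$ is in bijection with $\N$, it is countable. There is no real obstacle here: the only substantive content has already been packaged into \refp{prop:unique_representation}, and the theorem is essentially a bookkeeping statement recording that countability follows. If a more self-contained argument were desired, one could alternatively note that $\Fab \subseteq \Q$ and $\Q$ is countable, but the bijective argument via $\phiab$ is cleaner and sets up the monoid isomorphism discussed immediately afterwards in the section.
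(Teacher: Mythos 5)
Your proof is correct and follows essentially the same route as the paper: surjectivity of $\phiab$ from the definition of $\Fab$ as its image, injectivity from \refp{prop:unique_representation}, hence a bijection with $\N$ and countability. The paper additionally spells out that $\phiab$ is defined on all of $\N$ (i.e., that $\dab(p)$ exists for every $p$), but this is a minor well-definedness check already implicit in the definitions of Section \ref{sec:definitions}.
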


\begin{proof}
The set $\Fab$ is countable if and only if there exist at least one bijection between $\N$ and it by the definition of the property "is countable". Let us prove that $\phiab$ defined in section \ref{sec:definitions} is a bijection.

Firstly, the domain of $\phiab$ is $\N$. 
Effectively, for any finite $p\in\N$, $b^p$ is different from 0 by the hypothesis $b$ is positive. 
Then, as the exponential function $a^x$ tends to $+\infty$ when $x\to+\infty$, there exist an infinity of $q\in\N$ such that $a^q > b^p$. Let us take the smallest $q$ so that this proposition is verified. As we supposed that $a<b$, $a^q > b^p$ infers that $q>p$ and we can write that $q = p + \dab(p)$. Thus, $\frac{a^{p+\dab(p)}}{b^p}$ exist and is a member of $\Fab$ by the construction of this set. This is summarized by the domain of $\phiab$ is $\N$.

Secondly, as $\Fab = \phiab(\N)$ by its definition, so any $f\in\Fab$ has an antecedent in $\N$.

Lastly, \refp{prop:unique_representation} claims that any $f\in\Fab$ cannot have more than one antecedent in $\N$.

In conclusion, $\phiab$ is a bijection and $\Fab$ is countable.
\end{proof}

Now, define an operation on $\Fab$ induced by $\phiab$ from the addition on $\N$.

\begin{definition}
\label{def:star}
Given any $1<a<b$, two co-prime positive integers, the symbol "$\star$" denotes the binary operation from $\Fab\times \Fab$ in $\Fab$ defined by $$\phiab(p_1)\star\phiab(p_2) = \phiab(p_1+p_2).$$
\end{definition}

In this definition we use the symbol "$\star$" to denote this operation to avoid possible confusions with "$\cdot$", the multiplication on $\R$ which is also used later.

Now, we can claim the following theorem.

\begin{theorem}
\label{thm:star_is_a_monoid_law}
For any $a$ and $b$ co-prime positive integers satisfying $1<a<b$, $\phiab\colon (N,+, 0) \rightarrow (\Fab,\star, 1)$ is a monoid isomorphism.
\end{theorem}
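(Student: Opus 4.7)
The plan is to exploit the bijection $\phiab\colon \N \to \Fab$ established in \reft{thm:F_is_countable}, together with the uniqueness of representation from \refp{prop:unique_representation}, to transport the monoid structure of $(\N,+,0)$ to $(\Fab,\star,1)$. Nothing substantial should remain to compute, because the heart of the work has been done in the two results just cited.

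First I would verify that $\star$ is well-defined as a binary operation on $\Fab$. By \refp{prop:unique_representation}, each $f\in\Fab$ has a unique preimage $p\in\N$ under $\phiab$, so given any pair $(f_1,f_2)\in\Fab\times\Fab$ the integers $p_1,p_2$ with $f_i=\phiab(p_i)$ are uniquely determined; hence $\phiab(p_1+p_2)$ is an unambiguous element of $\Fab$, and the value $f_1\star f_2$ does not depend on any choice of writing. This is the only point where a definitional concern could arise, and it is immediately settled.

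Next I would check the monoid axioms on $(\Fab,\star)$. The element $1=\phiab(0)$, already identified in the proof of \refp{prop:1_the_only_integer}, is the two-sided neutral element, since $\phiab(p)\star\phiab(0)=\phiab(p+0)=\phiab(p)=\phiab(0+p)=\phiab(0)\star\phiab(p)$. Associativity of $\star$ is inherited from associativity of $+$ on $\N$ via
\begin{equation*}
(\phiab(p_1)\star\phiab(p_2))\star\phiab(p_3)=\phiab((p_1+p_2)+p_3)=\phiab(p_1+(p_2+p_3))=\phiab(p_1)\star(\phiab(p_2)\star\phiab(p_3)),
\end{equation*}
and commutativity of $\star$ follows in the same way from commutativity of $+$, yielding commutativity of the resulting monoid as a bonus.

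Finally, by the very definition of $\star$, one has $\phiab(p_1+p_2)=\phiab(p_1)\star\phiab(p_2)$, so $\phiab$ is a homomorphism; it maps $0$ to the identity $1$, and it is a bijection by \reft{thm:F_is_countable}. It is therefore a monoid isomorphism. The proof is essentially a formality, and I do not anticipate a real obstacle: the delicate parts (bijectivity and unique representation) are already available.
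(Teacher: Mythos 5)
Your proposal is correct and follows essentially the same route as the paper: transport the structure of $(\N,+,0)$ through the bijection $\phiab$ (from \reft{thm:F_is_countable} and \refp{prop:unique_representation}), verify closure, associativity, commutativity and the neutral element $1=\phiab(0)$, and conclude that $\phiab$ is a bijective morphism. Your explicit remark on well-definedness of $\star$ is a small additional precision the paper leaves implicit, but it does not change the argument.
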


\begin{proof}
This theorem may be easily deduced from the definition of $\star$, from \refp{thm:F_is_countable} and from \refp{prop:1_the_only_integer}.

Take any pair $(f_1,f_2)\in\Fab\times\Fab$. 
As $\phiab$ is a bijection by \reft{thm:F_is_countable}, there exist $(p_1,p_2) \in \N\times\N$ such that $f_1=\phiab(p_1)$ and $f_2=\phiab(p_2)$. 
Now, $p_1+p_2\in\N$ so that $\phiab(p_1+p_2)\in\Fab$ by the definition of $\Fab$. 
Thus $\star$ is an intern law.

The associativity of $\star$ may be easily obtained from the associativity of $+$ on $\N$ and \refp{prop:unique_representation}. 
Indeed, take any $f_1,~f_2$ and $f_3$ three members of $\Fab$.
By \reft{thm:F_is_countable}, for each of the $f_i,~i=1,2,3$ there exist a positive integer $p_i$ such that $f_i = \phiab(p_i)$. 
The definition of $\star$ induces that 
\begin{equation*}
\begin{array}{llll}
\phiab(p_1)\star(\phiab(p_2)\star\phiab(p_3))&=&\phiab(p_1) \star\phiab(p_2+p_3)&\\
&=&\phiab(p_1+(p_2+p_3))&\\
&=&\phiab((p_1+p_2)+p_3).&
\end{array}
\end{equation*}
Finally, applying two times the definition of $\star$ we obtain $$\phiab(p_1)\star(\phiab(p_2)\star\phiab(p_3))=(\phiab(p_1)\star\phiab(p_2))\star\phiab(p_3)$$ proving by the same way that $\star$ is associative.

The operation $\star$ is commutative. 
Effectively, Take $f_1=\phiab(p_1)$ and $f_2=\phiab(p_2)$ two members of $\Fab$.
By definition of $\star$, $f_1\star f_2 = \phiab(p_1+p_2)$. 
The commutativity of + in $\N$ gives $\phiab(p_1+p_2)=\phiab(p_2+p_1)$. 
That is $f_1\star f_2 = f_2\star f_1$.

Now, by \refp{prop:1_the_only_integer}, for any $f=\phiab(p) \in\Fab$, we can write that $f\star 1 = \phiab(p)\star\phiab(0) = \phiab(p+0)=  \phiab(p)$. 
By the commutativity of $\star$ we obtain $f\star 1 = 1\star f = f$. 
So 1 is the neutral element of $\Fab$.

To conclude, $(\Fab,\star)$ is a commutative monoid. 
Moreover, the definition of $\star$ claims that $\phiab(p_1+p_2) = \phiab(p_1)\star\phiab(p_2)$ and by \refp{prop:1_the_only_integer} $\phiab(0) = 1$. 
So $\phiab$ is a morphism of monoids and is even an isomorphism of monoids by \reft{thm:F_is_countable}.
\end{proof}

Now that we state that $(\Fab,\star)$ is a commutative monoid, let us have a closer look at how the operation $\star$ acts on the fractions of the form $\frac{a^{p+\dab(p)}}{b^p}$.

\begin{theorem}
\label{thm:star_is_merely_the_product}
Given $1<a<b$ two co-prime positive integers. 
Let $f_1$ and $f_2$ be two members of $\Fab$. 
Then $$f_1 \star f_2 = \left\lbrace\begin{array}{ll}
f_1\cdot f_2& \mbox{ if } f_1\cdot f_2 < a\\
\frac{1}{a}\cdot f_1\cdot f_2 & \mbox{ if } a\leq f_1\cdot f_2
\end{array}\right.$$
where "$\cdot$" denotes the multiplication of two real numbers.
\end{theorem}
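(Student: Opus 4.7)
The plan is to unwind the definition of $\star$ via $\phiab$ and use the uniqueness of $\dab(p)$ established in \refp{prop:unique_representation} together with the defining inequality $1\le a^{p+\dab(p)}/b^p<a$.

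First I would write $f_1=\phiab(p_1)=a^{p_1+\dab(p_1)}/b^{p_1}$ and $f_2=\phiab(p_2)=a^{p_2+\dab(p_2)}/b^{p_2}$, which exist and are unique by \reft{thm:F_is_countable}. Multiplying in $\R$ gives
\begin{equation*}
f_1\cdot f_2=\frac{a^{(p_1+p_2)+\dab(p_1)+\dab(p_2)}}{b^{p_1+p_2}},
\end{equation*}
while by \refd{def:star},
\begin{equation*}
f_1\star f_2=\phiab(p_1+p_2)=\frac{a^{(p_1+p_2)+\dab(p_1+p_2)}}{b^{p_1+p_2}}.
\end{equation*}
So the ratio $f_1\star f_2/(f_1\cdot f_2)$ equals $a^{\dab(p_1+p_2)-\dab(p_1)-\dab(p_2)}$, and everything reduces to locating the integer $\dab(p_1+p_2)$.

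Next I would use the fact that $f_1,f_2\in\Fab$ means $1\le f_1<a$ and $1\le f_2<a$, so $1\le f_1\cdot f_2<a^2$. Two cases arise. In the case $f_1\cdot f_2<a$, the number $f_1\cdot f_2$ satisfies $1\le f_1\cdot f_2<a$ and has the form $a^{(p_1+p_2)+k}/b^{p_1+p_2}$ with $k=\dab(p_1)+\dab(p_2)$; by the uniqueness of $\dab$ stated in Section~\ref{sec:definitions}, $k=\dab(p_1+p_2)$, hence $f_1\star f_2=f_1\cdot f_2$. In the case $a\le f_1\cdot f_2$, then $1\le f_1\cdot f_2/a<a$ and this number equals $a^{(p_1+p_2)+(k-1)}/b^{p_1+p_2}$; again by uniqueness, $k-1=\dab(p_1+p_2)$, so $f_1\star f_2=a^{-1}\cdot f_1\cdot f_2$.

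There is no real obstacle here: the only subtlety is checking that in each case the candidate exponent on $a$ is nonnegative so the expression indeed lies in $\Fab$. This follows immediately from $a\ge 2$ and $1\le f_1\cdot f_2$, which force $\dab(p_1)+\dab(p_2)\ge \dab(p_1+p_2)\ge 0$ in Case~1 and $\dab(p_1)+\dab(p_2)-1=\dab(p_1+p_2)\ge 0$ in Case~2. Once both cases are handled, the piecewise formula of the theorem is obtained.
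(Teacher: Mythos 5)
Your proposal is correct and follows essentially the same route as the paper's own proof: express $f_1=\phiab(p_1)$, $f_2=\phiab(p_2)$, note $f_1\star f_2=\phiab(p_1+p_2)$, split into the two cases according to whether $f_1\cdot f_2<a$, and identify $\dab(p_1+p_2)$ as $\dab(p_1)+\dab(p_2)$ or $\dab(p_1)+\dab(p_2)-1$ by uniqueness of the representation. Your extra check that the exponent on $a$ stays nonnegative is a harmless refinement, not a departure.
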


\begin{proof}
We know that $f_1 \star f_2 \in\Fab$ by \reft{thm:star_is_a_monoid_law}.
Define $p_1$ and $p_2$ such that $f_1=\phiab(p_1)$ and $f_2=\phiab(p_2)$.
By the definition of $\phiab$ we have $f_1 = \frac{a^{p_1+\dab(p_1)}}{b^{p_1}}$ and $f_2=\frac{a^{p_2+\dab(p_2)}}{b^{p_2}}$.
By the way, $f_1 \star f_2 = \phiab(p_1+p_2) = \frac{a^{p_1+p_2 + \dab(p_1+p_2)}}{b^{p_1+p_2}}$.
Moreover, $1\leq f_1\cdot f_2 < a^2$.

Consider separately the two cases introduced by \reft{thm:star_is_merely_the_product}.
\begin{ByCase}
\item Suppose that $1\leq f_1\cdot f_2 < a$. 
We have, $f_1\cdot f_2 = \frac{a^{p_1+\dab(p_1)}}{b^{p_1}} \cdot \frac{a^{p_2+\dab(p_2)}}{b^{p_2}}$.
This reads $1\leq f_1\cdot f_2 = \frac{a^{p_1+p_2+\dab(p_1)+\dab(p_2)}}{b^{p_1+p_2}}<a$.
So $f_1\cdot f_2 \in \Fab$.
This infers that $\phiab(p_1+p_2) = f_1\cdot f_2$.
More, $\dab(p_1+p_2) = \dab(p_1) + \dab(p_2)$ by identification. 
To conclude that the first case of \reft{thm:star_is_merely_the_product} holds.
\item Suppose now that $a\leq f_1\cdot f_2 < a^2$.
Clearly, $1\leq \frac{1}{a} \cdot f_1\cdot f_2 < a$.
Note that $\frac{1}{a} \cdot f_1\cdot f_2 = \frac{a^{p_1+p_2+\dab(p_1)+\dab(p_2)-1}}{b^{p_1+p_2}}$.
Thus, $\frac{1}{a} \cdot f_1\cdot f_2 = \phiab(p_1+p_2)\in\Fab$.
Furthermore, we get $\dab(p_1+p_2) = \dab(p_1) + \dab(p_2) - 1$ by identification.
Thus, the second case of \reft{thm:star_is_merely_the_product} also holds.
\end{ByCase}
\end{proof}

To conclude with this section, we can extend $\phiab$ on $\Z$ getting by the same way a group $(\G_{a,b}, \star)$ extending $(\Fab,\star)$.
As it is out of interest in this paper, we just give some indications on how to proceed without going into details.

Define $\tilde{\phi}_{a,b}(p)$ the extension of $\phiab$ on $\Z$ by $\tilde{\phi}_{a,b}(p)=\phiab(p)$ and $\tilde{\phi}_{a,b}(-p)=\frac{1}{\phiab(p)}$ where $p>0$. 
Note that $1\leq \tilde{\phi}_{a,b}(p) < a$ and $\frac{1}{a}\leq\tilde{\phi}_{a,b}(-p)\leq1$ when $0\leq p$.
Now, define $\G_{a,b} = \tilde{\phi}_{a,b}(\Z)$.
Finally, extends $\star$ on $\G_{a,b}$ by the use of the same definition as for $\Fab$.
Clearly, $(\G_{a,b},\star)$ is a commutative monoid as $(\Fab,\star)$ is.
Moreover, for every $p\in\Z$, $\tilde{\phi}_{a,b}(p)\star\tilde{\phi}_{a,b}(-p) = 1$.
So $(\G_{a,b},\star)$ is a group and $\tilde{\phi}_{a,b}$ is an isomorphism of groups between $(\Z,+,0)$ and $(\G_{a,b},\star,1)$.

\section{Record holders}
\label{sec:record_holders}
To establish the proof of the main theorem of this paper, we proceed as follow.
Firstly, we build a sequence of $\Fab$ that converges to 1.
In the same time, we get another convergent sequence which limit is $a$.
Secondly, we prove that any $x\in[1,a]_\R$ is the limit of a convergent sequence.
Thirdly, we deduce that $\Fab$ is dense in $[1,a]_\R$.
Finally, we conclude that our main theorem holds.

For our purpose, we need to build a sequence $\su$ of $\Fab$ that converges to 1.
The aim of this section is to define such a sequence.

Below, by minimum record holder understand any $\phiab(p)$ such that $\phiab(p) = \inf\{\phiab(p'),~1\leq p'\leq p\}$.
Reciprocally, a maximum record holder is any $\phiab(p)$ such that $\phiab(p) = \sup\{\phiab(p'),~1\leq p'\leq p\}$.

We observe record holders numerically obtained for $a=2$ and $b=3$.
We highlight how they may be obtained and how the minimum record holders and maximum ones interact.
We define the corresponding sequences and prove that they contain all the record holders and only them.

By the way, we obtain that $\su$ is composed of all the minimum record holders of $\Fab$.
But, to obtain $\su$, we have to define a second one, $\sv$, containing all the maximum record holders of $\Fab$ and only them.
Both sequences are strongly related one to each other so that we have to define and manage them in the same time.

To get an idea of these two sequences, let us observe the record holders of the sequence $\left(\phi_{(2,3)}(p)\right)_{p\in\N}$.

\refT{tab:record_holders} summarizes the record holders of the form $\phi_{(2,3)}(p)$ for $1\leq p \leq 2^{15}$. 
These results were obtained with a specific script written for Maxima\cite{Joyner2006}\cite{Maxima2023}, a Computer Algebra System under GPL license.
The used algorithm consists in a simple for loop on $p$ which computes $\dab(p)$ and $\phiab(p)$.
Each time it founds a new record holder, the script put this number and its related informations at the end of an initially empty list.

\begin{table}[ht]
\tiny
\caption{Repartition of the record holders $\leq 2^{15}$ for $a=2$ and $b=3$}
\label{tab:record_holders}

\begin{tabular}{ccccc|ccccc}

\multicolumn{5}{c|}{Minimum record holders}& \multicolumn{5}{c}{Maximum record holders} \\
$\phi(p)$ & $p$ & $d(p)$ & $\Delta p$ & $\Delta d(p)$ & $\phi(p)$ & $p$ & $d(p)$ & $\Delta p$ & $\Delta d(p)$ \\
\hline
$\frac{2^2}{3}$ & 1 & 1 & 1 & 1 & $\frac{2^2}{3}$ & 1 & 1 & 1 & 1 \\  [0.1em]
         &  &  &  &  & $\frac{2^4}{3^2}$ & 2 & 2 & 1 & 1 \\  [0.1em]
        $\frac{2^5}{3^3}$ & 3 & 2 & 2 & 1 &  &  &  &  &  \\  [0.1em]
        $\frac{2^8}{3^5}$ & 5 & 3 & 2 & 1 &  &  &  &  &  \\  [0.1em]
         &  &  &  &  & $\frac{2^{12}}{3^7}$ & 7 & 5 & 5 & 3 \\  [0.1em]
         &  &  &  &  & $\frac{2^{20}}{3^{12}}$ & 12 & 8 & 5 & 3 \\  [0.1em]
        $\frac{2^{27}}{3^{17}}$ & 17 & 10 & 12 & 7 &  &  &  &  &  \\  [0.1em]
        $\frac{2^{46}}{3^{29}}$ & 29 & 17 & 12 & 7 &  &  &  &  &  \\  [0.1em]
        $\frac{2^{65}}{3^{41}}$ & 41 & 24 & 12 & 7 &  &  &  &  &  \\  [0.1em]
         &  &  &  &  & $\frac{2^{85}}{3^{53}}$ & 53 & 32 & 41 & 24 \\  [0.1em]
        $\frac{2^{149}}{3^{94}}$ & 94 & 55 & 53 & 31 &  &  &  &  &  \\  [0.1em]
        $\frac{2^{233}}{3^{147}}$ & 147 & 86 & 53 & 31 &  &  &  &  &  \\  [0.1em]
        $\frac{2^{317}}{3^{200}}$ & 200 & 117 & 53 & 31 &  &  &  &  &  \\  [0.1em]
        $\frac{2^{401}}{3^{253}}$ & 253 & 148 & 53 & 31 &  &  &  &  &  \\  [0.1em]
        $\frac{2^{485}}{3^{306}}$ & 306 & 179 & 53 & 31 &  &  &  &  &  \\  [0.1em]
         &  &  &  &  & $\frac{2^{570}}{3^{359}}$ & 359 & 211 & 306 & 179 \\  [0.1em]
         &  &  &  &  & $\frac{2^{1055}}{3^{665}}$ & 665 & 390 & 306 & 179 \\  [0.1em]
        $\frac{2^{1539}}{3^{971}}$ & 971 & 568 & 665 & 389 &  &  &  &  &  \\  [0.1em]
        $\frac{2^{2593}}{3^{1636}}$ & 1636 & 957 & 665 & 389 &  &  &  &  &  \\  [0.1em]
        $\frac{2^{3647}}{3^{2301}}$ & 2301 & 1346 & 665 & 389 &  &  &  &  &  \\  [0.1em]
        $\frac{2^{4701}}{3^{2966}}$ & 2966 & 1735 & 665 & 389 &  &  &  &  &  \\  [0.1em]
        $\frac{2^{5755}}{3^{3631}}$ & 3631 & 2124 & 665 & 389 &  &  &  &  &  \\  [0.1em]
        $\frac{2^{6809}}{3^{4296}}$ & 4296 & 2513 & 665 & 389 &  &  &  &  &  \\  [0.1em]
        $\frac{2^{7863}}{3^{4961}}$ & 4961 & 2902 & 665 & 389 &  &  &  &  &  \\  [0.1em]
        $\frac{2^{8917}}{3^{5626}}$ & 5626 & 3291 & 665 & 389 &  &  &  &  &  \\  [0.1em]
        $\frac{2^{9971}}{3^{6291}}$ & 6291 & 3680 & 665 & 389 &  &  &  &  &  \\  [0.1em]
        $\frac{2^{11025}}{3^{6956}}$ & 6956 & 4069 & 665 & 389 &  &  &  &  &  \\  [0.1em]
        $\frac{2^{12079}}{3^{7621}}$ & 7621 & 4458 & 665 & 389 &  &  &  &  &  \\  [0.1em]
        $\frac{2^{13133}}{3^{8286}}$ & 8286 & 4847 & 665 & 389 &  &  &  &  &  \\  [0.1em]
        $\frac{2^{14187}}{3^{8951}}$ & 8951 & 5236 & 665 & 389 &  &  &  &  &  \\  [0.1em]
        $\frac{2^{15241}}{3^{9616}}$ & 9616 & 5625 & 665 & 389 &  &  &  &  &  \\  [0.1em]
        $\frac{2^{16295}}{3^{10281}}$ & 10281 & 6014 & 665 & 389 &  &  &  &  &  \\  [0.1em]
        $\frac{2^{17349}}{3^{10946}}$ & 10946 & 6403 & 665 & 389 &  &  &  &  &  \\  [0.1em]
        $\frac{2^{18403}}{3^{11611}}$ & 11611 & 6792 & 665 & 389 &  &  &  &  &  \\  [0.1em]
        $\frac{2^{19457}}{3^{12276}}$ & 12276 & 7181 & 665 & 389 &  &  &  &  &  \\  [0.1em]
        $\frac{2^{20511}}{3^{12941}}$ & 12941 & 7570 & 665 & 389 &  &  &  &  &  \\  [0.1em]
        $\frac{2^{21565}}{3^{13606}}$ & 13606 & 7959 & 665 & 389 &  &  &  &  &  \\  [0.1em]
        $\frac{2^{22619}}{3^{14271}}$ & 14271 & 8348 & 665 & 389 &  &  &  &  &  \\  [0.1em]
        $\frac{2^{23673}}{3^{14936}}$ & 14936 & 8737 & 665 & 389 &  &  &  &  &  \\  [0.1em]
        $\frac{2^{24727}}{3^{15601}}$ & 15601 & 9126 & 665 & 389 &  &  &  &  &  \\  [0.1em]
         &  &  &  &  & $\frac{2^{25782}}{3^{16266}}$ & 16266 & 9516 & 15601 & 9126 \\  [0.1em]
         &  &  &  &  & $\frac{2^{50509}}{3^{31867}}$ & 31867 & 18642 & 15601 & 9126 \\
\end{tabular}
\end{table}
    
To highlight the interaction between the minimum record holders and the maximum ones, \refT{tab:record_holders} is divided vertically into two parts. 
The left part lists the minimum record holders while the right one describes the maximum record holders.
For the same purpose, the records are sorted by increasing p-value.

Each row of the table corresponds to a new record holder.
If it is a minimum one, the right part is left empty.
Reciprocally, for a maximum record holder, the left part is left blank.
An exception to this rule is the first row. 
It describes $\phi_{(2,3)}(1)$ the first member of $\F_{(2,3)}$ that is considered by our script.
As it is the first explored number, we assume that it is the first minimum record holder and the first maximum one at the same time.
So we list it as so.

For each of the two vertical parts, the first column gives the record holder $\phi_{(2,3)}(p)$. 
The second column contains the value of $p$ for this record holder.
The third column collects the computed value of $d_{(2,3)}(p)$.
The fourth column named $\Delta p$ is the difference between the value of $p$ of the current record holder and the value of $p$ of the previous record holder of the same type.
The fifth and last column named $\Delta d(p)$ details the difference between $d_{(2,3)}(p)$ of the current record holder and the same value for the previous record holder of the same type.

Observe the evolution of the maximum record holders.
Note that we do not consider $\phi_{(2,3)}(0)$.
We avoid it because it is in fact $\inf \{f\in\Fab\}$ and it is the only integer contained in this set.
Our analysis starts with $\phi_{(2,3)}(1)$.

The first one is $\frac{2^2}{3^1}$ as previously announced as it is the first considered $\phi_{(2,3)}(p)$.
The second one is $\frac{2^4}{3^2}$ is a maximum record holder which may be seen as the product of the last maximum record holder found by the last minimum one.
Look at the following maximum record holders.
We can see that this behaviour repeats each time.

Observe now the evolution of the minimum record holders.
By the same way, we can see that we can obtain the next minimum record holder by multiplying the last minimum we found by last maximum record holder found divided by $2$.

Note that this sequence is divided into phases where one of the two record holders evolves while the other one stays invariant. 
Let us try to understand when this happens.

Take a phase where the maximum record holder evolves. 
For instance, we have the subsequence which start with $\frac{2^{8}}{3^5}$.
Note that $\frac{2^{8}}{3^5}$ is the last minimum record holder that we found until now while $\frac{2^4}{3^2}$ is the last maximum one.
Now $\frac{2^4}{3^2}\cdot \frac{2^{8}}{3^5}= \frac{2^{12}}{3^7}\approx 1.87288523< 2$ and this number is the next maximum record holder.
Furthermore, $\frac{2^{12}}{3^7}\cdot\frac{2^8}{3^5} = \frac{2^{20}}{3^{12}}\approx 1.97308074 < 2$ is the new maximum record holder.
But $\frac{2^{20}}{3^{12}} \cdot \frac{2^8}{3^5} = \frac{2^{28}}{3^{17}}\approx 2.07863650 > 2$ and the phase of maximum record holder growth stops.
A new phase starts in which minimum record holder decreases.
It starts at $\frac{2^{20}}{3^{12}}$ and the next minimum record holder is $\frac{2^{28}}{3^{17}}/2 = \frac{2^{27}}{3^{17}}\approx 1.03931825$.
The last maximum record holder found is $\frac{2^{20}}{3^{12}}$.
Pursue until the end of this phase and observe that each time the product of the last minimum record holder found by 
$\frac{2^{20}}{3^{12}}$ gives a result greater than $a$.
Once $\frac{2^{65}}{3^{41}}$ obtained as the last minimum record holder, we have $\frac{2^{65}}{3^{41}}\cdot \frac{2^{20}}{3^{12}} = \frac{2^{85}}{3^{53}} \approx 1.99582809 < 2 $.
Then starts a new phase where the maximum record holder increases.

Go further in \refT{tab:record_holders} to observe that this pattern repeats in the whole table.
So, while the product of the last maximum record holder found by the minimum one is lower than 2, we obtain a new maximum record holder in \refT{tab:record_holders}, otherwise we get a new minimum record holder.

Generalize and expand this behaviour with $1<a<b$, two co-prime integers, to define two asynchronous sequences of $\Fab$. 
For the sack of simplicity, we mimic \refT{tab:record_holders} by defining a sequence $\s$ whose elements are pairs $\si i = (\sui i,\svi i)$ where $\su$ is the sequence that contains the minima we found while $\sv$ contains the maxima.

\begin{definition}
\label{def:define_s}
Let $a$ and $b$ be two co-prime integers such that $1<a<b$. 
We recursively define a sequence  $\s=((\su,\sv))$ of pairs of $\R\times\R$  by $\si 0 = (\sui 0,\svi 0) = \left(\phiab(1),\phiab(1)\right)$ and 
$$
(\sui {i+1},\svi {i+1}) = \left\lbrace\begin{array}{ll}
(\sui i, \sui i \svi i) & \mbox{ when } \sui i \svi i<a\\
(\frac{1}{a} \sui i \svi i,\svi i) & \mbox{ when } a \leq \sui i \svi i.\\
\end{array}\right.
$$
\end{definition}

Our next goal is to prove that the distinct $\sui i$ and the distinct $\svi i$ are record holders of $\Fab$ and all the record holders of $\Fab$ are present either in $\su$ or $\sv$ depending on their type.

\begin{theorem}
\label{thm:record_holders}
Take any pair of co-prime integers $1<a<b$. 
Let $\su$ and $\sv$ be the two sequences defined in \refd{def:define_s}. 
The set $\lbrace \sui i, i\in\N\rbrace$ is the set of all the minimum record holders of $\Fab$ and the set $\lbrace \svi i, i\in\N\rbrace$ is the set of all the maximum record holders of $\Fab$
\end{theorem}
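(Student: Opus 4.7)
The plan is to proceed by strong induction on the number of record holders already uncovered. First, by \reft{thm:star_is_merely_the_product}, I would write $\sui i = \phiab(P_i)$ and $\svi i = \phiab(Q_i)$ for non-negative integers $P_i, Q_i$, so that the recursion of \refd{def:define_s} translates to $(P_{i+1}, Q_{i+1}) = (P_i, P_i + Q_i)$ in the growth case $\sui i \cdot \svi i < a$ and $(P_i + Q_i, Q_i)$ in the shrink case $\sui i \cdot \svi i \geq a$. In particular, every $\sui i$ and $\svi i$ lies in $\Fab$ with an explicit $\phiab$-preimage.

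The inductive statement I would prove is: $\sui i$ is the minimum and $\svi i$ the maximum of $\{\phiab(p) : 1 \leq p \leq \max(P_i, Q_i)\}$, attained uniquely at $P_i$ and $Q_i$ respectively (via \refp{prop:unique_representation}), and no index $r$ strictly between $\max(P_i, Q_i)$ and $P_i + Q_i$ is a record holder of either type. The base case $i = 0$, with $P_0 = Q_0 = 1$, is immediate; the substantive content is the ``no hidden record in the gap'' assertion.

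To establish that, I would write any such $r$ as $\max(P_i, Q_i) + m$ with $0 < m < \min(P_i, Q_i)$, so that $\phiab(r)$ equals either $\svi i \star \phiab(m)$ or $\sui i \star \phiab(m)$ depending on which of $P_i, Q_i$ is larger. The sign of $P_i - Q_i$ itself records whether the last event was a growth ($P_i < Q_i$) or a shrink ($P_i > Q_i$). Combined with the inductive bounds $\phiab(m) \in [u_{\mathrm{prev}}, v_{\mathrm{prev}}]$, where $u_{\mathrm{prev}}, v_{\mathrm{prev}}$ are the two preceding distinct extremal record holders, this would pin down which branch of the piecewise $\star$-formula is active: a previous shrink leaves $u_{\mathrm{prev}} \cdot \svi i \geq a$ (so $\star$ divides by $a$ when $P_i < Q_i$), while a previous growth leaves $\sui i \cdot v_{\mathrm{prev}} < a$ (so $\star$ is plain multiplication when $P_i > Q_i$); in either sub-case $\phiab(r)$ lands strictly in $(\sui i, \svi i)$.

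The main obstacle I expect is the saturation cases $\phiab(m) = u_{\mathrm{prev}}$ or $\phiab(m) = v_{\mathrm{prev}}$: there the candidate $\phiab(r)$ equals $\sui i$ or $\svi i$ exactly, and by injectivity of $\phiab$ (\refp{prop:unique_representation}) this forces $r \in \{P_i, Q_i\}$, points on the boundary of the gap rather than in its interior, so no spurious record holder appears. With the gap cleared, the endpoint $r = P_i + Q_i$ yields by \reft{thm:star_is_merely_the_product} the value $\sui i \cdot \svi i > \svi i$ in the growth case (a new maximum matching $\svi{i+1}$) or $\sui i \cdot \svi i / a < \sui i$ in the shrink case (a new minimum matching $\sui{i+1}$), completing the induction. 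Taking the union over $i \in \N$ then identifies $\{\sui i\}$ with the set of minimum record holders of $\Fab$ and $\{\svi i\}$ with the set of maximum ones.
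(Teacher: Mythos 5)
Your proposal is correct and takes essentially the same route as the paper: induction over the stages of $\s$, reduction of the theorem to the absence of record holders at indices strictly between $\max(P_i,Q_i)$ and $P_i+Q_i$, and the same algebraic core based on the decomposition $\phiab(r)=\phiab(\max(P_i,Q_i))\star\phiab(m)$ together with the previously attained record values. The only real difference is organizational: you argue directly that each gap value lies in $(\sui i,\svi i)$, using $u_{\mathrm{prev}}\cdot\svi i\geq a$ and $\sui i\cdot v_{\mathrm{prev}}<a$ (plus monotonicity of $\su$, $\sv$) to decide the branch of $\star$ and injectivity of $\phiab$ to rule out equality, whereas the paper assumes a gap record and derives a contradiction by exhibiting an earlier missed record, namely $a\,\phiab(p)/\svi i$ or $\phiab(p)/\sui i$ --- the same manipulation read contrapositively.
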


Decompose the proof of \reft{thm:record_holders} into lemmas.
The first one states that any member of $\s$ is in $\Fab\times\Fab$.

\begin{lem}
\label{lem:s_in_FxF}
The sequence $\s$ is a sequence of $\Fab\times\Fab$.
\end{lem}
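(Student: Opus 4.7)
The plan is to proceed by induction on $i$, relying essentially on \reft{thm:star_is_merely_the_product} which identifies the operation $\star$ with the real multiplication (possibly scaled by $1/a$). The key observation is that the recursion in \refd{def:define_s} is engineered precisely so that the new pair is obtained by applying $\star$ to the old components: in Case 1 (when $\sui i \svi i < a$) we get $\sui i \star \svi i = \sui i \cdot \svi i$, and in Case 2 (when $a \le \sui i \svi i$) we get $\sui i \star \svi i = \frac{1}{a}\sui i \svi i$.

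For the base case, $\si 0 = (\phiab(1), \phiab(1))$ is clearly a pair of elements of $\Fab$, since $\phiab(1) \in \Fab$ by definition.

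For the inductive step, assume $\sui i, \svi i \in \Fab$. Since $1 \le \sui i < a$ and $1 \le \svi i < a$, we have $1 \le \sui i \svi i < a^2$, so exactly one of the two cases of \refd{def:define_s} applies. In either case, the newly introduced component is $\sui i \star \svi i$, which lies in $\Fab$ because $\star$ is an internal law on $\Fab$ by \reft{thm:star_is_a_monoid_law}. The other component is simply $\sui i$ or $\svi i$, which lies in $\Fab$ by the inductive hypothesis. Hence $\si{i+1} \in \Fab \times \Fab$.

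There is no real obstacle here: the lemma is essentially a consistency check that the recursion is well-posed, and \reft{thm:star_is_merely_the_product} does all the work by matching the two branches of the recursion to the two branches of the formula for $\star$.
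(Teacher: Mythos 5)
Your proof is correct and takes essentially the same route as the paper: induction on $i$, with the unchanged component covered by the inductive hypothesis and the new component identified as $\sui i \star \svi i \in \Fab$ by matching the two branches of \refd{def:define_s} with the two branches of \reft{thm:star_is_merely_the_product}. The paper merely makes the same argument more explicit by writing $\sui i = \phiab(p_1)$, $\svi i = \phiab(p_2)$ and concluding the new component equals $\phiab(p_1+p_2)$.
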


\begin{proof}
Let us prove \refl{lem:s_in_FxF} by induction.
\paragraph{Base case} Clearly, $\phiab(1)\in\Fab$ by the definition of this set. 
As an immediate consequence, $\si 0=(\phiab(1).\phiab(1))\in\Fab\times\Fab$.
\paragraph{Induction case} Suppose that $\si i=(\sui i,\svi i) \in \Fab\times\Fab$ for a given $i$ and prove that $\si {i+1}=(\sui {i+1},\svi {i+1}) \in \Fab\times\Fab$. 

As $\sui i$ is supposed to be a member of $\Fab$, there exist $p_1$ such that $\sui i = \phiab(p_1)$ and $1<\sui i<a$. 
For the same reasons, there exist $p_2$ such that $\svi i = \phiab(p_2)$ and $1<\svi i<a$.

Note that $1 < \sui i \svi i < a^2$ so we have two possibilities: $1<\sui i \svi i<a$ and $a\leq \sui i \svi i<a^2$. 
Let us consider these two possibilities separately.

\subparagraph{Case 1: $1<\sui i \svi i < a$.}

In this case, \refd{def:define_s} defines $\sui {i+1} = \sui i$ and $\svi {i+1} = \sui i \svi i$. 
The hypothesis of induction says that $\sui {i+1} = \sui i\in\Fab$. 

From another point of view, the hypothesis of this case is $1<\sui i \svi i < a$ inferring that $1<\phiab(p_1)\phiab(p_2)<a$. 
With the help of \reft{thm:star_is_merely_the_product} we get $\phiab(p_1)\phiab(p_2) = \phiab(p_1)\star\phiab(p_2) = \phiab(p_1+p_2)$. 
In conclusion, $\svi {i+1} = \phiab(p_1+p_2)\in\Fab$ by the construction of this set.

\subparagraph{Case 2: {$a\leq \sui i \svi i < a^2$.}}

In this case, \refd{def:define_s} defines $\sui {i+1} = \frac{1}{a} \sui i \svi i$ and $\svi {i+1} = \svi i$. 
It is immediate to see that $\svi {i+1} = \svi i\in\Fab$ from the hypothesis of induction. 

Concerning $\sui {i+1}$, it is supposed that $a\leq \sui i \svi i < a^2$. 
That leads to $1<\frac{1}{a}\phiab(p_1)\phiab(p_2)<a$. 
\reft{thm:star_is_merely_the_product} claims that $\frac{1}{a}\phiab(p_1)\phiab(p_2) = \phiab(p_1)\star\phiab(p_2) = \phiab(p_1+p_2)$ by \refd{def:star}. 
Thus, $\sui {i+1} = \phiab(p_1+p_2)\in\Fab$.

So, whatever the case is, $\si {i+1} = (\sui {i+1},\svi {i+1})$ is a member of $\Fab\times\Fab$. 

To conclude, by induction we proved that $\s$ is a sequence of $\Fab\times\Fab$.
\end{proof}

Before going further, note that for any $i\in\N$, if $\si i = (\phiab(p_1), \phiab(p_2))$ then, either  $\si {i+1} = (\phiab(p_1), \phiab(p_1)\star\phiab(p_2))$ when $\phiab(p_1)\cdot\phiab(p_1)<a$ or $\si {i+1} = (\phiab(p_1)\star\phiab(p_2),\phiab(p_2))$ when $a<\phiab(p_1)\cdot\phiab(p_1)$.
Thus the following lemma that may be considered as a corollary of \refl{lem:s_in_FxF}.

\begin{lem}
\label{lem:next_rh_is_ui_star_vi}
The sequence $\s$ defined in \refd{def:define_s} may be also defined as $\si 0 = (\phiab(1),\phiab(1))$ and
for any $i\in\N$,
$$(\sui {i+1},\svi {i+1}) = \left\lbrace\begin{array}{ll}
(\sui i, \sui i \star \svi i) &\mbox{ when } \sui i \svi i<a\\
(\sui i\star \svi i,\svi i) & \mbox{ when } a\leq \sui i \svi i.
\end{array}\right.$$
\end{lem}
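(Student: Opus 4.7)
The plan is to deduce this lemma directly from \refl{lem:s_in_FxF} together with \reft{thm:star_is_merely_the_product}. Since $\s$ has already been shown to live in $\Fab \times \Fab$, each component $\sui i$ and $\svi i$ can be written as $\phiab(p_1)$ and $\phiab(p_2)$ for suitable $p_1,p_2 \in \N$. The whole point is simply to translate the real-number multiplication that appears in \refd{def:define_s} into the monoid operation $\star$ on $\Fab$.

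First, I would fix an arbitrary $i \in \N$ and invoke \refl{lem:s_in_FxF} to assert $\sui i, \svi i \in \Fab$. Then I would split into the two cases of \refd{def:define_s}. In the case $\sui i \cdot \svi i < a$, the first bullet of \reft{thm:star_is_merely_the_product} gives $\sui i \star \svi i = \sui i \cdot \svi i$, so substituting into \refd{def:define_s} yields $\si{i+1} = (\sui i, \sui i \star \svi i)$, exactly the first branch claimed. In the case $a \leq \sui i \cdot \svi i$, the second bullet of \reft{thm:star_is_merely_the_product} gives $\sui i \star \svi i = \frac{1}{a}\sui i \cdot \svi i$, and again substitution into \refd{def:define_s} yields $\si{i+1} = (\sui i \star \svi i, \svi i)$, matching the second branch.

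The base case $\si 0 = (\phiab(1), \phiab(1))$ is untouched by the reformulation, so nothing needs to be said about it beyond repeating the definition. There is essentially no obstacle here: the lemma is a rewriting, and once the two results cited above are in hand the argument is a one-line case analysis. The only thing worth double-checking is that the hypothesis $1 < \sui i \svi i$ used implicitly by \reft{thm:star_is_merely_the_product} is satisfied, which follows from $1 < \sui i$ and $1 < \svi i$ (both members of $\Fab$ other than the neutral element, and equal to $\phiab(1) > 1$ initially, with subsequent values still strictly greater than $1$ because they remain in $\Fab$).
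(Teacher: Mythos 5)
Your proposal is correct and matches the paper's route: the paper presents this lemma as an immediate corollary of \refl{lem:s_in_FxF}, whose proof already carries out precisely your case analysis, writing $\sui i=\phiab(p_1)$, $\svi i=\phiab(p_2)$ and invoking the two cases of \reft{thm:star_is_merely_the_product} to identify $\sui i\svi i$ (resp. $\frac{1}{a}\sui i\svi i$) with $\sui i\star\svi i$. The extra check that $1<\sui i\svi i$ is harmless but not needed, since \reft{thm:star_is_merely_the_product} only requires both factors to lie in $\Fab$.
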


Next lemma claims that neither $\sui i$ nor $\svi i$ may be infinitely constant.

\begin{lem}
\label{lem:u_v_both_change}
For all $i\in\N$, there exist an integer $k>0$ such that $\sui {i+k} \neq \sui i$ and $\svi {i+k} \neq \svi i$.
\end{lem}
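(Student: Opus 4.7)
The plan is to exploit the alternating-phase structure of the sequence $\s$: by \refd{def:define_s}, at each step exactly one of the components $\su$ or $\sv$ changes, according to whether $\sui i \svi i$ lies below $a$ or above $a$. I would first rule out the boundary case $\sui i \svi i = a$: writing $\sui i = \phiab(p_1)$ and $\svi i = \phiab(p_2)$ via \refl{lem:s_in_FxF}, this equality reads $a^{p_1+p_2+\dab(p_1)+\dab(p_2)-1} = b^{p_1+p_2}$, and the coprimality of $a$ and $b$ forces $p_1+p_2 = 0$ together with the incompatible exponent condition $-1 = 0$. An easy induction from $\si 0 = (\phiab(1), \phiab(1))$, using \refp{prop:1_the_only_integer} to know $\phiab(1) > 1$, then yields the strict bounds $1 < \sui i$ and $1 < \svi i < a$ for every $i$, so $\s$ really decomposes into phases in which one component is held constant while the other one strictly evolves.

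The crucial intermediate step is to show that no phase lasts forever. Suppose $\sui j = \sui i$ for all $j \geq i$; then the recursion must always be in the first case of \refd{def:define_s}, giving $\svi{j+1} = \sui i \svi j$, so $\svi j = \sui i^{j-i} \svi i$ tends to $+\infty$ (since $\sui i > 1$) and contradicts $\svi j < a$. Symmetrically, if $\svi j = \svi i$ for all $j \geq i$, the recursion $\sui{j+1} = \sui j \svi i / a$ with $\svi i / a < 1$ forces $\sui j \to 0$, contradicting $\sui j > 1$.

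The lemma then follows by waiting for one phase transition. Assume $\sui i \svi i < a$; the other case is symmetric. By the non-persistence just established, let $L \geq 1$ be the smallest integer such that $\sui{i+L} \svi{i+L} > a$. For $0 \leq j \leq L$ the value $\sui{i+j}$ stays equal to $\sui i$, while $\svi{i+j}$ is strictly increasing (each step multiplies by $\sui i > 1$), so in particular $\svi{i+L} > \svi i$. At step $L+1$ the phase flips: $\svi{i+L+1} = \svi{i+L} \neq \svi i$, and $\sui{i+L+1} = \sui{i+L} \svi{i+L}/a$ differs from $\sui{i+L} = \sui i$ because $\svi{i+L} \neq a$. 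Taking $k = L+1$ then satisfies both required inequalities. The main obstacle, which I would treat carefully, is precisely the non-persistence argument of the middle paragraph: it rests on propagating the strict bounds $1 < \sui i$ and $\svi i < a$ through the entire sequence, which in turn relies on the exclusion $\sui i \svi i \neq a$ obtained at the outset.
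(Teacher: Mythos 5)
Your proof is correct and takes essentially the same route as the paper's: assume one component stays constant, deduce geometric growth of the other past $a$ (or geometric decay below $1$), and contradict the bounds $1<\sui j,\svi j<a$ supplied by \refl{lem:s_in_FxF}. The additional care you take (excluding $\sui i\svi i=a$ via coprimality and pinpointing the exact phase-flip index) is sound but only a more explicit packaging of the same argument.
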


\begin{proof}
Take any $i\in\N$ and observe the subsequence $(\si {i+k})_{k>0}$ extracted from $\s$. \refd{def:define_s} introduces two different cases. 
Let us consider them separately.

\subparagraph{Case 1: $\sui i \svi i < a$.}  
In such a case, \refd{def:define_s} declares that $\svi {i+1} = \sui i \svi i \neq \svi i$.
So, for any $k\geq 1$, $\svi {i+k} \neq \svi i$. 
Now, $\sui {i+1} = \sui i$.

Suppose that for any $k>0$, $\sui{i+k} = \sui i$. 
This implies that for any $k>0$, $\sui {i+k}\svi {i+k}<a$.
It is easy to see that, in such a case, for any $k>0$, $\svi {i+k} = \sui i^k \cdot \svi i$. 
As $\sui i\in\Fab$, $\sui i>1$ and the sequence $(\sui i^k \cdot \svi i)_{k\in\N} $ is a strictly increasing sequence which tends to $+\infty$.
As a consequence, there exist $k>0$ such that $\sui i^k \cdot \svi i > a$.
Take the lowest $k$ so that $\sui i^k \cdot \svi i > a$.
For this $k$, $\sui{i+k+1} = \frac{1}{a}\sui {i+k}\svi {i+k}$ by \refd{def:define_s}.
This contradicts the hypothesis assuming that for any $k>0$, $\sui {i+k} = \sui i$.
So this hypothesis does not hold and there exist a $k>0$ so that $\sui {i+k} \neq \sui i$. 
As $\svi {i+k} \neq \svi i$, we are done for this case.

\subparagraph{Case 2: $a \leq \sui i \svi i$.}
 In this case, \refd{def:define_s} sets the next term of $\su$ as $\sui {i+1} = \frac{1}{a} \sui i \svi i \neq \sui i$  and $\svi {i+1} = \svi i$.
From this, we can affirm that for any $k\geq 1$, $\sui {i+k} \neq \sui i$.

Let us suppose now that for any $k>0$, $a\leq \sui {i+k}\svi {i+k}$.
By a way similar to the one used in previous case, we can prove that $\sui {i+k} = \sui i \left(\frac{\svi i}{a}\right)^k$. 
As $\svi i\in\Fab$, $\svi i < a$ and $\frac{\svi i}{a}<1$.
So the sequence $\left(\sui i\left(\frac{\svi i}{a}\right)^k\right)_{k\in\N}$ converges to 0. 
This means that there exist $k>0$ so that $\sui i\left(\frac{\svi i}{a}\right)^k<1$ which contradicts the hypothesis assuming that such a $k$ does not exist.
We can conclude that there exist $k>0$ such that $\sui {i+k} \neq \sui i$ and $\svi {i+k} \neq \svi i$.

In conclusion of this reasoning, in both cases we obtain that there exist $k>0$ such that $\sui {i+k} \neq \sui i$ and $\svi {i+k} \neq \svi i$.

So under the conditions of \refl{lem:u_v_both_change} its conclusion holds.
\end{proof}

In summary of \refl{lem:u_v_both_change} and its proof, on the one hand, we can see that $\su$ monotonously decreases because when this sequence varies $\sui i$ is multiplied by $\frac{\svi i}{a}<1$.
Moreover, $\su$ never stays infinitely constant.
Furthermore, $\su$ admits 1 as a lower bound because all of its elements are members of $\Fab$.
Thus, $\su$ converges. 

On the other hand, $\sv$ monotonously increases. 
Indeed, when $\svi i$ changes, we multiply it by $\sui i>1$.
More, $\sv$ never stays infinitely constant.
Now, all the $\svi i\in\Fab$. 
Therefore, $a$ is a upper bound of $\sv$.
In conclusion, $\sv$ also converges.

Thus the following lemma that we use later to prove the next theorem.
\begin{lem}
\label{lem:u_and_v_converge}
As they are defined in \refd{def:define_s}, the sequence $\su$ converges to a limit $l_1\geq 1$ and the sequence $\sv$ converges to a limit $l_2 \leq a$.
\end{lem}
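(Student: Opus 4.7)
The plan is to observe that this lemma follows immediately from the monotone convergence theorem once we formalize the monotonicity and boundedness already discussed in the paragraph preceding the statement. The discussion there essentially lays out the whole argument; I would just turn it into two clean applications of the theorem. Since the hard structural work has been done in Lemma~\ref{lem:s_in_FxF} (membership in $\Fab\times\Fab$), no real obstacle is anticipated.

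For $\su$, I would first check monotonicity. By Definition~\ref{def:define_s}, at each index $i$ either $\sui{i+1}=\sui i$ or $\sui{i+1}=\tfrac{1}{a}\sui i\svi i$. In the second case, Lemma~\ref{lem:s_in_FxF} guarantees $\svi i\in\Fab$, whence $\svi i<a$, and therefore $\sui{i+1}=\sui i\cdot\tfrac{\svi i}{a}<\sui i$. Hence $\su$ is non-increasing. For the lower bound, Lemma~\ref{lem:s_in_FxF} also gives $\sui i\in\Fab$, which by the very definition of $\Fab$ forces $\sui i\geq 1$. A non-increasing sequence of real numbers bounded below converges, and the limit inherits the lower bound, so $\su$ converges to some $l_1\geq 1$.

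The argument for $\sv$ is symmetric. From Definition~\ref{def:define_s}, either $\svi{i+1}=\svi i$ or $\svi{i+1}=\sui i\svi i$; in the second case $\sui i\in\Fab$ gives $\sui i\geq 1$, so $\svi{i+1}\geq\svi i$ and $\sv$ is non-decreasing. The upper bound comes from $\svi i\in\Fab$, which yields $\svi i<a$ for every $i$. The monotone convergence theorem then produces a limit $l_2$, and since every term satisfies $\svi i<a$ the limit satisfies $l_2\leq a$.

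Note that Lemma~\ref{lem:u_v_both_change} is not needed for the bare convergence statement; its role is to guarantee that neither sequence is eventually constant, which will matter when identifying $l_1$ and $l_2$ as $1$ and $a$ respectively later in the paper. For the present lemma it is enough to combine the step-wise formula of Definition~\ref{def:define_s} with the $\Fab$-membership supplied by Lemma~\ref{lem:s_in_FxF}.
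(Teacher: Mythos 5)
Your proposal is correct and matches the paper's own argument: the paper likewise derives convergence from the monotone convergence theorem, using Definition~\ref{def:define_s} for monotonicity (multiplication by $\frac{\svi i}{a}<1$, resp.\ by $\sui i>1$, when a term changes) and $\Fab$-membership from Lemma~\ref{lem:s_in_FxF} for the bounds $1$ and $a$. Your remark that Lemma~\ref{lem:u_v_both_change} is not needed for bare convergence is also accurate; the paper mentions the non-eventual-constancy only as supplementary context before stating the lemma.
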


Moreover, the proof of \refl{lem:u_v_both_change} points out that if $\su$ remains constant between the indices $i$ and $i+k$ then $\svi {i+k} = \svi i \cdot \sui i ^k$.
Reciprocally, if $\sv$ does not change between the indices $i$ and $i+k$ then $\sui {i+k}=\sui i\left(\frac{\svi i}{a}\right)^k$. 
This is our next lemma.

\begin{lem}
\label{lem:u_i+k_and_v_i+k}
As $\su$ and $\sv$ are defined in \refd{def:define_s}, if $\su$ remains constant between the indices $i$ and $i+k$ then $\svi {i+k} = \svi i \cdot \sui i ^k$. Reciprocally, if $\sv$ does not change between the indices $i$ and $i+k$ then $\sui {i+k}=\sui i\left(\frac{\svi i}{a}\right)^k$
\end{lem}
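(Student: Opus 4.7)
The plan is to prove both statements by induction on $k$, using the case analysis from \refd{def:define_s}. The crucial observation is that the hypothesis ``$\su$ remains constant from index $i$ to $i+k$'' forces each of the intermediate steps to fall into Case~1 of \refd{def:define_s} (because Case~2 strictly modifies $\sui{}$), and symmetrically, ``$\sv$ remains constant from $i$ to $i+k$'' forces each intermediate step to fall into Case~2 (since Case~1 strictly modifies $\svi{}$). This is really the only ``insight'' required; once it is noted, everything reduces to iterating a single multiplicative step.

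For the first statement, I would proceed as follows. Base case $k=0$: the identity $\svi{i+0} = \svi i \cdot \sui i^{0}$ is trivial. Inductive step: suppose the claim holds for some $k \geq 0$, and assume that $\su$ is constant from index $i$ to $i+k+1$, so in particular $\sui{i+k+1} = \sui{i+k}$. Since passing from $\si{i+k}$ to $\si{i+k+1}$ via Case~2 of \refd{def:define_s} would strictly modify $\sui{}$ (because $\svi{i+k}/a < 1$ as $\svi{i+k} \in \Fab$), the transition must occur via Case~1, which means $\sui{i+k}\svi{i+k} < a$ and yields $\svi{i+k+1} = \sui{i+k}\,\svi{i+k}$. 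The induction hypothesis combined with $\sui{i+k} = \sui i$ gives
\begin{equation*}
\svi{i+k+1} \;=\; \sui{i+k}\,\svi{i+k} \;=\; \sui i \cdot \bigl(\svi i \cdot \sui i^{k}\bigr) \;=\; \svi i \cdot \sui i^{k+1},
\end{equation*}
which completes the induction.

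The reciprocal statement follows by the symmetric argument: assuming $\sv$ is constant from $i$ to $i+k+1$, the transition at step $i+k$ cannot be by Case~1 (which would strictly multiply $\svi{}$ by $\sui{i+k} > 1$), so it must be by Case~2, giving $\sui{i+k+1} = \tfrac{1}{a}\sui{i+k}\svi{i+k} = \sui{i+k}\cdot\tfrac{\svi i}{a}$, and the induction hypothesis $\sui{i+k} = \sui i (\svi i / a)^{k}$ yields $\sui{i+k+1} = \sui i (\svi i / a)^{k+1}$.

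I do not expect a genuine obstacle: the only subtlety is justifying that in the ``constant $\su$'' scenario Case~1 is actually triggered at each intermediate step, which is immediate from the strict inequalities $1 < \sui i$ and $\svi i < a$ guaranteed by \refl{lem:s_in_FxF} together with \refp{prop:1_the_only_integer}. These same inequalities were already used in the proof of \refl{lem:u_v_both_change}, so the present lemma really packages the intermediate computation that appeared inside that proof.
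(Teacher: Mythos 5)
Your proof is correct and follows essentially the same route as the paper, which treats this lemma as the computation already carried out inside the proof of Lemma~\ref{lem:u_v_both_change}: constancy of one sequence forces every intermediate transition into the branch of Definition~\ref{def:define_s} that leaves it unchanged, and iterating that single multiplicative step (your explicit induction) gives $\svi{i+k}=\svi i\cdot\sui i^{k}$, respectively $\sui{i+k}=\sui i\left(\frac{\svi i}{a}\right)^{k}$. Making the induction and the strict inequalities $\sui i>1$, $\svi i<a$ explicit is a tidier write-up of the same argument, not a different one.
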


As $\phiab$ is a bijection, its inverse denoted $\phiab^{-1}$ is also a bijection.
This function maps any $f\in\Fab$ to $\phiab^{-1}(f) = p\in\N$ such that $\phiab(p) = f$.

Now we can introduce the proof of \reft{thm:record_holders}.

\begin{proof}[Proof of \reft{thm:record_holders}]
Define first the sequence $\pi = (\pi_i)_{i\in\N}$ where for every $i\in\N$, $\pi_i = \sup\lbrace\phiab^{-1}(\sui i), \phiab^{-1}(\svi i)\rbrace$.
This sequence is a monotonously strictly increasing sequence.
Effectively, take any $i\in\N$, suppose that $\phiab^{-1}(\sui i) = p_1$ and $\phiab^{-1}(\svi i) = p_2$. 
Both are necessarily positive.
Now, either $\sui {i+1} = \sui i \star \svi i$ and $\svi {i+1} = \svi i$ or $\sui {i+1} = \sui i$ and $\svi {i+1} = \sui i \star \svi i$.
Further, $\sui i \star \svi i = \phiab(p_1)\star\phiab(p_2) = \phiab(p_1+p_2)$. 
As $p_1+p_2 > p_1$ and $p_1+p_2 > p_2$ then, $\pi_{i+1} = p_1+p_2 > \pi_i = \sup\lbrace p_1,p_2\rbrace$.

Now, as $\su$ is monotonously decreasing, it is sufficient to prove that there is no $p$, $\pi_i< p < \pi_{i+1}$ such that $\phiab(p)< \sui i$ whatever $i$ is.
Indeed, take any $i\in\N$. 
Suppose that $\sui i$ is the last minimum record holder found.
If for any $\pi_i < p < \pi_{i+1}$, $\phiab(p)\geq \sui i$, then whatever the case $\sui {i+1} = \sui i$ or $\sui {i+1} < \sui i$ is, there is no minimum record holder $f$ such that $\pi_i<\phiab^{-1}(f)<\pi_{i+1}$. 
In the second of the two possible cases mentioned above, $\sui {i+1}$ is the following minimum record holder under the above condition.

By the same reasoning, we can prove that it is sufficient that for all $i\in\N$ and all $p\in\N$, $\pi_i< p < \pi_{i+1}$, none of the $\phiab(p) > \svi i$.

Prove by induction that for all $i\in\N$ and all $p\in\N$ such that $\pi_i< p < \pi_{i+1}$, $\sui i < \phiab(p) < \svi i$.

\paragraph{Base cases} As $\phiab(1)$ is the first member of $\Fab$ we consider, let it be the first minimum record holder and the first maximum one.
So $\pi_0 = 1$.
The only possible counter-example is $\phiab(0)=1$ but it is out of purpose here.

Now, the next member that we consider is $\phiab(1)\star \phiab(1) = \phiab(2)$ by \refl{lem:next_rh_is_ui_star_vi}.
It may be either $\sui 1$ or $\svi 1$.

Firstly, suppose that $\phiab(1)\phiab(1)<a$. 
So, $\svi 1 = \phiab(2)$ and we start a phase where $\sv$ evolves leaving $\su$ unchanged. 
There is no integer $p$ verifying $1<p<2$. Thus there cannot exist another record holder of the form $\phiab(p)$, $\pi_0<p<\pi_1$.

Further, as long as the current phase runs, $\sui i = \sui 0$ and $\svi i = \svi {i-1}\star\sui 0$ inferring that 
$\pi_i = i+1$.
Suppose now that at a given rank $P$, $a<\sui 0 \svi P$.
This rank exists by \refl{lem:u_v_both_change}.
Then $\sui {P+1} = \sui 0 \star \svi P = \phiab(P+2)$ and $\pi_{P+1} = P+2$.
So, from $p=1$ to $p=P+1$ all the $\phiab(p)$ are maximum record holders. 
They are all retained in $\svi {0}, \cdots, \svi {P}$.
Furthermore, $\phiab(P+2)$ is the first minimum record holder different from $\phiab(1)$.
It is retained in $\sui {P+1}$. 
As a consequence, there is no possibility of a missed record holder $\phi(p)$ where $1\leq p \leq P+2$.

Secondly, in the case that $a<\phiab(1)\phiab(1)$, starts a phase modifying $\su$ instead of $\sv$ and the same reasoning leads to a similar result. 

Let us say that this phase stops at rank $P$, then all the $\phiab(p)$, $1\leq p\leq P+1$ are kept in $\sui {0}, \cdots, \sui {P}$. The first maximum record holder is $\phiab(P+2)=\svi {P+1}$.
In this case also, there are no missed record holder.

\paragraph{Induction step}Let us make use of $P$ as it is defined in the above base case. Assume that for a given $i\in\N, i>P+1$ there is no missed record holder $\phiab(p)$ where $p<\pi_i$ and prove that there is no record holder of the form $\phiab(p)$ where $p$ satisfies $\pi_i<p<\pi_{i+1}$.
 
Let us reason by case and by contradiction in each case. 

\subparagraph{Minimum record holders.}
Suppose their exist $\pi_i<p<\pi_{i+1}$ so that $\phiab(p)<\sui i$. Deduce from this hypothesis that their exists a missed minimum record holder $\phiab(p')$ with $1\leq p' < \pi_i$.

Let $k$ be the largest positive integer such that $\sui {i-k} = \sui i$.
Note that $k$ may be null when $\sui {i-1}\neq \sui i$.
By the construction of the sequence $\s$, the choice of $k$ infers that $\sui {i-k-1}\neq \sui {i-k}$.
Then, we have 
\begin{equation}
\label{eq:u_i-k_fct_u_i-k-1}
\sui {i-k} = \sui {i-k-1} \star \svi {i-k-1} = \sui {i-k-1}\cdot \frac{\svi {i-k-1}}{a}
\end{equation}
while
\begin{equation}
\label{eq:v_i-k_fct_v_i-k-1}
\svi {i-k-1} = \svi {i-k}.
\end{equation}
The definition of the sequence $\pi$ infers that $\phiab^{-1}(\svi i) \leq \pi_i<p$.

The definition of $\phiab$ says that 
\begin{equation}
\label{eq:phi(p)_as_fraction}
\phiab(p) = \frac{a^{p+\dab(p)}}{b^p}.
\end{equation}
By \refl{lem:s_in_FxF}, $\svi i\in\Fab$.
Define $p_1 = \phiab^{-1}(\svi i)$.
Thus $\svi i = \phiab(p_1)$ and 
\begin{equation}
\label{eq:v_i_as_fraction}
\svi i = \frac{a^{p_1+\dab(p_1)}}{b^{p_1}}.
\end{equation}

Let us have a look at $\frac{a \cdot \phiab(p)}{\svi i}$ and prove that this rational number is a minimum record holder that has not been retained in $\su$.

Firstly, let us state that $\frac{a \cdot \phiab(p)}{\svi i}\in\Fab$.

From \eqref{eq:phi(p)_as_fraction} and \eqref{eq:v_i_as_fraction} we deduce that 
\begin{equation}
\label{eq:fraction_phi(p-p1)}
\frac{a \cdot \phiab(p)}{\svi i} = \frac{a^{p-p_1 + \dab(p)-\dab(p_1)+1}}{b^{p-p_1}}.
\end{equation}
As $p-p_1 > 0$ by hypothesis, $\dab(p)\geq \dab(p_1)$.
Then, both exponents are strictly positive. 
So $\frac{a \cdot \phiab(p)}{\svi i}$ is of the same form as any member of $\Fab$.

Now, $\svi i<a$ because $\svi i\in\Fab$ by \refl{lem:s_in_FxF}.
As a consequence, $\phiab(p)<\frac{a \cdot \phiab(p)}{\svi i}$.
As $\phiab(p)\in\Fab$, we obtain that $1<\phiab(p)<\frac{a \cdot \phiab(p)}{\svi i}$.

\refl{lem:u_i+k_and_v_i+k} claims that $\svi i = \svi {i-k} \sui {i-k}^k$.
This implies that 
\begin{equation}
\label{eq:developed_fraction}
\frac{a \cdot \phiab(p)}{\svi i} = \frac{a \cdot \phiab(p)}{\svi {i-k} \sui {i-k}^k}.
\end{equation}
Also $\sui {i-k}\in\Fab$ so $\sui {i-k}>1$ by \refl{lem:s_in_FxF}.
Taking into account that $k$ may be null $\sui {i-k}^k\geq 1$.
Thus, \eqref{eq:developed_fraction} infers that $\frac{a \cdot \phiab(p)}{\svi i} \leq \frac{a \cdot \phiab(p)}{\svi {i-k}}$.
Now, by hypothesis $\phiab(p) < \sui i = \sui {i-k}$.
Thus $\frac{a \cdot \phiab(p)}{\svi i} < \frac{a \cdot  \sui {i-k}}{\svi {i-k}}$.
Equation \eqref{eq:u_i-k_fct_u_i-k-1} claims $\sui {i-k} = \sui {i-k-1}\frac{\svi {i-k-1}}{a}$.
As a consequence, $\frac{a \cdot  \sui {i-k}}{\svi {i-k}} = \sui {i-k-1}$ because $\svi {i-k}=\svi {i-k-1}$ by \eqref{eq:v_i-k_fct_v_i-k-1}.
Then 
\begin{equation}
\label{eq:phiab_p_lt_u_i-k-1}
\frac{a \cdot \phiab(p)}{\svi i} < \sui {i-k-1} < 2.
\end{equation}

In conclusion of this development $1< \frac{a \cdot \phiab(p)}{\svi i}<2$.
It is also of the form of any member of $\Fab$.
Then $\frac{a \cdot \phiab(p)}{\svi i}\in\Fab$ and, by \eqref{eq:phiab_p_lt_u_i-k-1},
\begin{equation}
\label{eq:fraction_lower_than_u_i-k-1}
\frac{a \cdot \phiab(p)}{\svi i} < \sui {i-k-1}.
\end{equation}
As $\su$ monotonously decreases, for all $j\leq i-k-1$, $\frac{a \cdot \phiab(p)}{\svi i} < \sui j$.

Now let us look at the position of $\frac{a \cdot \phiab(p)}{\svi i}$ in the sequence $\su$.

Equation \eqref{eq:fraction_phi(p-p1)} affirms that $\frac{a \cdot \phiab(p)}{\svi i} = \frac{a^{p-p_1 + \dab(p)-\dab(p_1)+1}}{b^{p-p_1}}$ and we know that $\frac{a \cdot \phiab(p)}{\svi i}\in\Fab$.
This leads to $\frac{a \cdot \phiab(p)}{\svi i} = \phiab(p-p_1)$ with $\dab(p-p_1) = \dab(p)-\dab(p_1)+1$ by identification.

Now, $\pi_{i+1} = \phiab^{-1}(\sui i) + \phiab^{-1}(\svi i)$.
Define $p_2 = \phiab^{-1}(\sui i)$.
Then, $\pi_{i+1} = p_1 + p_2$.
Furthermore, by hypothesis, $p<\pi_{i+1}$ which reads $p < p_1+p_2$.
So $\phiab^{-1}(\frac{a \cdot \phiab(p)}{\svi i}) = p-p_1 < p_1+p_2-p_1 = p_2$.
By the definition of $k$, $\sui {i-k} = \sui i$.
As a consequence $\phiab^{-1}(\sui {i-k})=p_2$ and \eqref{eq:fraction_lower_than_u_i-k-1} claims $\frac{a \cdot \phiab(p)}{\svi i} < \sui {i-k-1}$.

In Summary, $\frac{a \cdot \phiab(p)}{\svi i}\in\Fab$, $\frac{a \cdot \phiab(p)}{\svi i} < \sui {i-k-1}$ by \eqref{eq:fraction_lower_than_u_i-k-1}.
Moreover, $\phiab^{-1}(\frac{a \cdot \phiab(p)}{\svi i}) <  \phiab^{-1}(\sui {i-k})$.

This infers that $\frac{a \cdot \phiab(p)}{\svi i}$ is a minimum record holder before $\sui {i-k}$ that has not be retained in $\su$ which contradicts the hypothesis of induction.
Thus, such a $\phiab(p)$ does not exist and there is no minimum record holder $\phiab(p)$ with $p$ strictly comprise between $\pi_i$ and $\pi_{i+1}$.

\subparagraph{Maximum record holders.}
As the proof of the validity of \reft{thm:record_holders} in this case is very similar to the proof of the previous case, we only highlight the main lines.

Introduce a proof by contradiction and assume that there exist $p\in\N$, $\pi_i < p < \pi_{i+1}$ so that $\phiab(p) > \svi i$.

Define $k$ as the largest integer so that $\svi i = \svi {i-k}$. 
As a consequence $\svi {i-k} = \sui {i-k-1} \star \svi {i-k-1} = \sui {i-k-1}\cdot \svi {i-k-1}$.
Also, $\sui {i-k-1} = \sui {i-k}$ and $\sui i = \sui {i-k} \left(\frac{\svi {i-k}}{a}\right)^k$.

Let $p_1$ denotes $\phiab^{-1}(\sui i)$. 
We have $p_1\leq\pi_i<p$.

Now, consider $\frac{\phiab(p)}{\sui i}$. 
We have $\frac{\phiab(p)}{\sui i} = \frac{a^{p-p_1+\dab(p)-\dab(p_1)}}{b^{p-p_1}}$ inferring that is of the same form as any $\Fab$ member.
As $\sui i > 1$, $\frac{\phiab(p)}{\sui i}<\phiab(p)<a$.
Also, $\frac{\phiab(p)}{\sui i} = \frac{\phiab(p)}{\sui {i-k}}\left(\frac{a}{\svi {i-k}}\right)^k$. 
As $\frac{a}{\svi {i-k}}>1$ and $k\geq 0$, $\frac{\phiab(p)}{\sui i} \geq  \frac{\phiab(p)}{\sui {i-k}}$.
Now, by hypothesis $\phiab(p) > \svi i$ so $\frac{\phiab(p)}{\sui i}>\frac{\svi i}{\sui {i-k}}$.
To finish this part, $\sui {i-k} = \sui {i-k-1}$, $\svi i = \svi {i-k}$ and $\svi {i-k} = \sui {i-k-1}\cdot \svi {i-k-1}$ so $\frac{\phiab(p)}{\sui i}>\svi {i-k-1}>1$. 
In conclusion, $\frac{\phiab(p)}{\sui i}\in\Fab$.

In summary, we have $\frac{\phiab(p)}{\sui i}\in\Fab$ and $\frac{\phiab(p)}{\sui i} > \svi {i-k-1}$.
Now, $\sv$ is a monotonously increasing sequence so that for every $j\leq i-k-1$, $\frac{\phiab(p)}{\sui i} > \svi j$.

Define now $p_2 = \phiab^{-1}(\svi i)$. 
With the same logic as in the previous case we obtain that $\phiab^{-1}(\frac{\phiab(p)}{\sui i}) < p_2 = \phiab^{-1}(\svi {i-k})$.

The conclusion of this reasoning is that $\frac{\phiab(p)}{\sui i}$ is a maximum record holder coming before $\svi {i-k}$ and this record holder has not been retained in $\sv$. This is in contradiction with the hypothesis of induction.

Then, such a $\phiab(p)$ does not exist.

So, there is no record holder $\phiab(p)$ with $\pi_i<p<\pi_{i+1}$. Thus, none can be omitted.

In conclusion, we proved by induction that there is no omitted record holder in the sequence $\left(\phiab(p)\right)_{p\in\N-\{0\}}$.
Thus \reft{thm:record_holders} holds.
\end{proof}

In next section we fix the limit of $\su$ and $\sv$.

\section{Limit of $\su$ and $\sv$}
\label{sec:limit_of_u_and_v}
In this section we introduce the limits of $\su$ and $\sv$.

Before this, rephrase a little bit the definition of the convergence of a real sequence in a more convenient manner for our purpose.
\begin{prop}
\label{prop:sequence_convergence}
For any sequence $u=(u_i)_{i\in\N}$ of real numbers, $u$ converges to $l\in\R,~l>0$ if and only if for all $\varepsilon>0$ there exist $N\in\N$ so that for every $i>N,~\frac{1}{1+\varepsilon} l < u_i < l\cdot(1+\varepsilon)$.
\end{prop}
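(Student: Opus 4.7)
The statement is just a reformulation of the standard $\varepsilon$-$N$ definition of the limit of a real sequence in the multiplicative form $\frac{l}{1+\varepsilon}<u_i<l(1+\varepsilon)$ (which is well-defined because $l>0$). The plan is therefore to prove the equivalence in both directions by a simple translation between additive and multiplicative bounds.

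For the forward direction, I would assume the usual definition: for every $\varepsilon'>0$ there exists $N\in\N$ such that $|u_i-l|<\varepsilon'$ for all $i>N$. Given $\varepsilon>0$, I would apply this with the specific choice $\varepsilon'=\frac{l\varepsilon}{1+\varepsilon}$, which is positive because $l>0$. The upper inequality $u_i<l(1+\varepsilon)$ then follows from $u_i-l<\varepsilon'<l\varepsilon$, and the lower inequality $\frac{l}{1+\varepsilon}<u_i$ follows from $u_i-l>-\varepsilon'=-(l-\frac{l}{1+\varepsilon})$, i.e., $u_i>\frac{l}{1+\varepsilon}$.

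For the reverse direction, I would assume the multiplicative formulation and, given $\varepsilon>0$, apply it with $\varepsilon'=\varepsilon/l>0$. Then for $i>N$ we have $\frac{l}{1+\varepsilon'}<u_i<l(1+\varepsilon')=l+\varepsilon$, giving $u_i-l<\varepsilon$; and $u_i-l>\frac{l}{1+\varepsilon'}-l=-\frac{l\varepsilon'}{1+\varepsilon'}>-l\varepsilon'=-\varepsilon$, so $|u_i-l|<\varepsilon$, which is the standard definition.

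There is no real obstacle here; the only thing to keep track of is that the hypothesis $l>0$ is used twice (to ensure $\frac{l}{1+\varepsilon}$ makes sense, and so that dividing $\varepsilon$ by $l$ in the reverse direction is legitimate), and that the choice $\varepsilon'=\frac{l\varepsilon}{1+\varepsilon}$ in the forward direction is exactly what makes the lower bound collapse to $\frac{l}{1+\varepsilon}$. The verification of both chains of inequalities is routine algebra.
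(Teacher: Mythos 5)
Your proposal is correct and follows essentially the same route as the paper: in the forward direction you pick $\varepsilon'=\frac{l\varepsilon}{1+\varepsilon}$ (the paper's $\varepsilon_1$, defined there via $1-\frac{\varepsilon_1}{l}=\frac{1}{1+\varepsilon}$), and in the reverse direction you pick $\varepsilon'=\varepsilon/l$ and bound $\frac{l}{1+\varepsilon'}$ from below by $l-l\varepsilon'$, just as the paper does via $1-\varepsilon_1<\frac{1}{1+\varepsilon_1}$. The only cosmetic difference is that you handle both inequalities with the single choice $\varepsilon'$, whereas the paper invokes convergence twice and takes $N=\sup\{N_1,N_2\}$; both are fine.
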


\begin{proof}
Firstly, let us prove that the condition is necessary.

Take any sequence $u=(u_i)_{i\in\N}$ of real numbers that converges to $0<l\in\R$.
Take any $\varepsilon>0$ and define $\varepsilon_1$ such that  $1-\frac{\varepsilon_1}{l}= \frac{1}{1+\varepsilon}$.
This gives $\varepsilon_1 = \frac{l\varepsilon}{1+\varepsilon}$ so $\varepsilon_1>0$ as $l$ is supposed to be positive.

By hypothesis, $u$ converges to $l$. 
This means that there exist $N_1$ so that for every $i>N_1,$ $l-\varepsilon_1 < u_i$. 
This inequality rewrites $l (1-\frac{\varepsilon_1}{l}) < u_i$. That is $\frac{1}{1+\varepsilon} l < u_i$ by the definition of $\varepsilon_1$.

Now, $l\varepsilon > 0$ infers that there exist $N_2$ such that for every $i>N_2,~u_i < l+l\varepsilon = l(1+\varepsilon)$.

Define $N = \sup\{N_1,N_2\}$. Then for every $i>N$, $\frac{1}{1+\varepsilon} l < u_i < l(1+\varepsilon)$.
Thus, for every $\varepsilon>0$, there exist $N\in\N$ such that for every $i>N$, $\frac{1}{1+\varepsilon} l < u_i < l(1+\varepsilon)$.
That is this condition is necessary.

Secondly, let us prove that the condition is sufficient.

Take a sequence $u$ and $l>0$ satisfying that for all $\varepsilon>0$ there exist $N\in\N$ so that for every $i>N,~\frac{1}{1+\varepsilon} l < u_i < l(1+\varepsilon)$ and prove that $u$ converges to $l$.

Take any $\varepsilon>0$ and define $\varepsilon_1 = \frac{\varepsilon}{l}$.
As $l>0$ by hypothesis, $\varepsilon_1 > 0$.

By hypothesis, there exist $N\in\N$ so that for every $i>N,~\frac{1}{1+\varepsilon_1} l < u_i < l(1+\varepsilon_1)$.
Take such a $N$ and consider any $i>N$.
Then $\frac{1}{1+\varepsilon_1} l < u_i < l(1+\varepsilon_1)$.

As $1-\varepsilon_1^2 < 1$, the inequality  $1-\varepsilon_1 < \frac{1}{1+\varepsilon_1}$ holds.
So, $l (1-\varepsilon_1) < u_i < l(1+\varepsilon_1)$ which reads $l-l\varepsilon_1 < u_i < l+l\varepsilon_1$ or more precisely $l-\varepsilon < u_i < l+\varepsilon$ by the definition of $\varepsilon_1$.
In summary, $u$ converges to $l$.

So the condition of \refp{prop:sequence_convergence} is sufficient for $u$ to converge to $l$.

In conclusion, \refp{prop:sequence_convergence} holds.
\end{proof}

As a remark, the condition $l\neq 0$ is strictly necessary otherwise we should get $0<u_i<0$ which is impossible.
Even  with the use of non strict inequalities, we should get $0\leq u_i\leq0$ which limits $u$ to be  the constant sequence $u_i=0$ from a given rank $N$.
Moreover, there may exist a generalization of this property for any $l\neq 0$.
But, as \refp{prop:sequence_convergence} is sufficient for our purpose, we let this property as it is.

This section aims to prove the following theorem.

\begin{theorem}
\label{thm:u_tendsto_1_v_to_2}
For any co-prime integers $a$ and $b$ satisfying $1<a<b$, the sequences $\su$ and $\sv$ defined in \refd{def:define_s} both converges and
$$\lim\limits_{i\to+\infty}\sui i = 1 \textrm { and } \lim\limits_{i\to+\infty}\svi i = a.$$
\end{theorem}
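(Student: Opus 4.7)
By \refl{lem:u_and_v_converge} the sequences $\su$ and $\sv$ converge to limits $l_1 \geq 1$ and $l_2 \leq a$. My plan is to first establish that $l_1 l_2 = a$, and then use the monotonicity of $\su$ together with \refl{lem:u_v_both_change} to force $l_1 = 1$, from which $l_2 = a$ follows automatically.

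For the first step I would argue by contradiction, exploiting the dichotomy in \refd{def:define_s}. Suppose $l_1 l_2 < a$. Since $\sui i \svi i \to l_1 l_2 < a$, for all sufficiently large $i$ we have $\sui i \svi i < a$, so only the first rule of \refd{def:define_s} applies, which means $\su$ is eventually constant. This contradicts \refl{lem:u_v_both_change}, which asserts that $\su$ changes infinitely often. A symmetric argument rules out $l_1 l_2 > a$: in that case only the second rule applies eventually, making $\sv$ eventually constant, again a contradiction. Hence $l_1 l_2 = a$.

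For the second step, suppose for contradiction that $l_1 > 1$; then $l_2 = a/l_1 < a$. Since $\sv$ is non-decreasing with limit $l_2$, we have $\svi i \leq l_2$ for every $i$, hence $\svi i / a \leq 1/l_1 < 1$. By \refl{lem:u_v_both_change} there are infinitely many indices $i$ at which $\sui{i+1} \neq \sui i$, and at each such index the second rule of \refd{def:define_s} applies, giving $\sui{i+1} = \sui i \cdot \svi i / a \leq \sui i / l_1$. After $n$ such decreases $\su$ has value at most $\sui 0 \cdot l_1^{-n}$, which tends to $0$ as $n \to \infty$. This contradicts $\sui i \geq 1$ (which comes from $\sui i \in \Fab$ via \refl{lem:s_in_FxF}), so $l_1 = 1$ and consequently $l_2 = a$.

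I expect the main subtlety to be in the first step, where one must eliminate $l_1 l_2 \neq a$: the argument is short but genuinely uses both that the recursion alternates between two multiplicative rules governed by the sign of $\sui i \svi i - a$ and that neither component can stagnate forever. The second step is a geometric-decay estimate that contracts by a uniform factor $1/l_1 < 1$ at every change of $\su$, so once $l_1 l_2 = a$ is in hand the contradiction with $\sui i \geq 1$ is immediate.
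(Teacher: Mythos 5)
Your proposal is correct, and it takes a genuinely different and substantially shorter route than the paper. The paper reaches the limits through heavy machinery: it decomposes $\s$ into alternating phases, computes the phase lengths $\lambda(\eta)$ (\refl{lem:def_lambda}), extracts synchronized subsequences $\tu$ and $\tv$, passes to $\tw = a/\tv$ (\refl{lem:def_tu_and_tw}), proves the interleaving inequalities $1<\tui {i+1}<\twi {i+1}<\tui i<\twi i$ (\refl{lem:tu_and_tw_decrease}), shows the two subsequences share a limit, and finally shows by contradiction that this common limit is $1$ (\refl{lem:tu_and_tw_converge_to_1}). You bypass all of this with two elementary observations: first, if $l_1 l_2 \neq a$ then, since $\sui i \svi i \to l_1 l_2$, eventually only one branch of \refd{def:define_s} fires, so one of the two components is frozen from some index on, contradicting \refl{lem:u_v_both_change}; second, once $l_1 l_2 = a$ is known, $l_1>1$ would give the uniform bound $\svi i /a \leq l_2/a = 1/l_1 < 1$, so each of the infinitely many decreases of $\su$ (again \refl{lem:u_v_both_change}) contracts it by at least the factor $1/l_1$, driving it below $1$ and contradicting $\sui i \in \Fab$ (\refl{lem:s_in_FxF}). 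Both steps correctly use the monotonicity facts recorded before \refl{lem:u_and_v_converge}. What the comparison buys: your argument is shorter, needs no new sequences or case analysis on whether $(\phiab(1))^2 \lessgtr a$, and isolates exactly what drives the result (the dichotomy in the recursion plus non-stagnation); the paper's longer route, on the other hand, produces structural by-products of independent interest — the explicit phase lengths, the synchronized record-holder subsequences and their nested inequalities — which are not needed for the limit statement itself.
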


To reach this goal, we have to prove several lemmas.

We know that $\s$ is divided into phases. 
During each of them either $\su$ is modified and $\sv$ remains constant or
$\sv$ changes and $\su$ is left the same.
These two features appear alternatively. 
Understand that a phase where $\su$ changes is followed by another one that modifies $\sv$ itself followed by a phase acting on $\su$ and so on.

Note now that $\s$ may start indifferently by a phase modifying either $\su$ or $\sv$.
For instance, for $a=2$ and $b=3$ the sequence starts with the modification of $\sv$ as seen in \refT{tab:record_holders}. 

The other possibility happens when $a=7$ and $b=8$ for instance. 
Effectively, ${u_{(7,8)}}_0 = {v_{(7,8)}}_0 = \phi_{7,8}(1) = \frac{49}{8}$.
Now $\frac{49}{8}\cdot \frac{49}{8} = \frac{7^4}{8^2} \approx 37.515625 > 7$. So ${u_{(7,8)}}_1 = \frac{7^3}{8^2}$ and ${v_{(7,8)}}_1 = {v_{(7,8)}}_0$.

A consequence of this presentation of the results is that $u$ and $v$ evolve asynchronously and it may be quite difficult to manage it.
To define their respective limit, we propose to extract a subsequence from each one so that these two new sequences evolve synchronously.

Let $\tu$ denotes the subsequence of $\su$ and $\tv$ the one for $\sv$.
To do so, we index each phase by its order of appearance in $\s$.

Remind now that the phase of $\s$ indexed by 1 may modify either $\su$ or $\sv$ depending on $\left(\phiab(1)\right)^2$ compared to $a$.
But all the phases with an odd index modify the same sequence as the first one.
On the opposite, all the phases with an even index change the other sequence.
We call an odd[resp. even] phase, a phase with an odd[resp. even] index.

Describe now how $\tu$ and $\tv$ are extracted from $\su$ and $\sv$.
We keep the first member of each sequence : $\tui 0 = \tvi 0 = \phiab(1)$.
Then we retain only the last element of each phase. 

Thus, if $\left(\phiab(1)\right)^2<a$, the first phase and all the odd ones modify $\sv$ while even ones modify $\su$.
So, when $\left(\phiab(1)\right)^2<a$, for any $i>0$, $\tvi i$ receives the last element of the phase $2i-1$ and $\tui i$ is the last element of the phase $2i$.

Reciprocally, when $a<\left(\phiab(1)\right)^2$, all the odd phases modify $\su$ while the even ones changes $\sv$.
So, when $a<\left(\phiab(1)\right)^2$, for any $i>0$, $\tui i$ receives the last element of the phase $2i-1$ and $\tvi i$ is the last element of the phase $2i$.

This may be summarized as: a pair composed of an odd phase indexed $2i-1$ and the following one indexed $2i$ defines the pair $(\tui i, \tvi i)$. Thus, $\tu$ and $\tv$ may be seen as synchronously defined.

Let us introduce some more notations and give some more precisions related to phases.
The notation $h(\eta)$ identifies the index of head of the phase $\eta$, its first element.
The index of its last element called its tail, is denoted by $t(\eta)$.
The last element of phase $\eta$ is also the first element of phase $\eta+1$.
So two consecutive phases overlap by one element and $t(\eta) = h(\eta+1)$.
Moreover, $\lambda(\eta) = t(\eta)-h(\eta)$ counts the number of elements the phase $\eta$ generates that is its length minus 1.

Let us describe what is $\lambda(\eta)$.
\begin{lem}
\label{lem:def_lambda}
\begin{equation*}
\lambda(\eta) = \left\lbrace \begin{array}{ll}
\left \lfloor \frac{\log a - \log \svi {h(\eta)}}{\log \sui {h(\eta)}} \right\rfloor & \textrm{ when phase $\eta$ modifies $\sv$,} \\[1em]
\left \lfloor \frac{\log \sui {h(\eta)}}{\log a - \log \svi {h(\eta)}} \right\rfloor & \textrm{ when phase $\eta$ modifies $\su$.}
\end{array}\right.
\end{equation*}
\end{lem}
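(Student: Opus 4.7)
The plan is to count exactly how many consecutive transitions of the recursion in \refd{def:define_s} stay inside phase $\eta$, using the closed-form expressions of \refl{lem:u_i+k_and_v_i+k}. I first treat the case where phase $\eta$ modifies $\sv$. Write $u_0 = \sui{h(\eta)}$ and $v_0 = \svi{h(\eta)}$. By \refl{lem:u_i+k_and_v_i+k}, for every $k \in \{0, 1, \ldots, \lambda(\eta)\}$ we have $\sui{h(\eta)+k} = u_0$ and $\svi{h(\eta)+k} = u_0^k v_0$. The transition from index $h(\eta)+k$ to $h(\eta)+k+1$ remains in phase $\eta$ precisely when the ``modify $\sv$'' alternative of \refd{def:define_s} triggers, i.e.\ $u_0^{k+1} v_0 < a$. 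Since $\lambda(\eta)$ is by definition the number of transitions inside the phase, this must hold for $k = 0,\ldots,\lambda(\eta)-1$, while the transition out of the tail belongs to the next phase, giving $u_0^{\lambda(\eta)+1} v_0 \geq a$. Thus
\[
u_0^{\lambda(\eta)} v_0 < a \leq u_0^{\lambda(\eta)+1} v_0.
\]

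Since $u_0 \in \Fab$ with $u_0 > 1$ gives $\log u_0 > 0$, and $v_0 < a$ gives $\log a - \log v_0 > 0$, dividing the logarithmic form of the previous line by $\log u_0$ yields
\[
\lambda(\eta) < \frac{\log a - \log v_0}{\log u_0} \leq \lambda(\eta) + 1.
\]
To identify the middle quantity with $\lambda(\eta)$ via the floor convention of Section~\ref{sec:definitions}, I must promote the right-hand inequality to a strict one. This is the crux of the argument and the step I expect to be the main obstacle: both $u_0$ and $v_0$ lie in $\Fab$ and are strictly greater than $1$, so each has the form $a^{q_i}/b^{p_i}$ with $p_i \geq 1$; the product $u_0^{\lambda(\eta)+1} v_0$ therefore carries a strictly positive power of $b$ in its denominator, and since $\gcd(a,b)=1$ it cannot simplify to the integer $a$. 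Hence $u_0^{\lambda(\eta)+1} v_0 > a$ strictly and the first formula of the lemma follows.

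The case where phase $\eta$ modifies $\su$ is symmetric. Now \refl{lem:u_i+k_and_v_i+k} gives $\sui{h(\eta)+k} = u_0 (v_0/a)^k$ and $\svi{h(\eta)+k} = v_0$; staying in the phase requires $\sui{h(\eta)+k}\svi{h(\eta)+k} \geq a$, which rearranges to $u_0 \geq (a/v_0)^{k+1}$. The same bookkeeping then produces $u_0 \geq (a/v_0)^{\lambda(\eta)}$ and $u_0 < (a/v_0)^{\lambda(\eta)+1}$, and dividing the corresponding logarithmic inequalities by $\log a - \log v_0 > 0$ gives $\lambda(\eta) \leq \log u_0 / (\log a - \log v_0) < \lambda(\eta)+1$. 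This is already in the form required by the floor convention, so the second formula follows at once; the same coprimality remark would rule out the $\lambda(\eta) = \log u_0/(\log a - \log v_0)$ boundary in any case. Everything outside the coprimality step is a direct unrolling of the recursion.
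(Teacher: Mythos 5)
Your proof is correct and follows essentially the same route as the paper: unroll the phase with \refl{lem:u_i+k_and_v_i+k}, trap $a$ (respectively $1$) between consecutive values of the product, take logarithms, and read off the floor. The only difference is that you explicitly justify strictness at the phase boundary via the coprimality of $a$ and $b$, a point the paper silently assumes when it writes strict inequalities where \refd{def:define_s} only provides $a \leq \sui i \svi i$.
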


\begin{proof}
Firstly, let us suppose that phase $\eta$ modifies $\sv$. 
This means that for every $0 \leq k \leq \lambda(\eta)$, $\sui {h(\eta) + k} = \sui {h(\eta)}$ and $\svi {h(\eta) + k} = \svi {h(\eta)} \sui {h(\eta)}^k$ by \refl{lem:u_i+k_and_v_i+k}.
Furthermore, phase $\eta$ modifies $\sv$ so for every $0 \leq k < \lambda(\eta)$, $\sui {h(\eta)+k} \svi {h(\eta)+k}<a$ by the construction of $\s$.
Replacing the two factors by their respective value we get $\sui {h(\eta)} \svi {h(\eta)} \sui {h(\eta)}^k<a$ which may be rewritten as $\svi {h(\eta)} \sui {h(\eta)}^{k+1}<a$.
This is verified particularly for $k = \lambda(\eta)-1$.
Thus, $\svi {h(\eta)} \sui {h(\eta)}^{\lambda(\eta)}<a$.
Now, phase $\eta$ stops at $t(\eta) = h(\eta)+\lambda(\eta)$.
This infers that $a<\svi {h(\eta)} \sui {h(\eta)}^{\lambda(\eta)+1}$.
In summary, $$\svi {h(\eta)} \sui {h(\eta)}^{\lambda(\eta)}<a<\svi {h(\eta)} \sui {h(\eta)}^{\lambda(\eta)+1}.$$
The natural logarithm of this relationship gives $$\log \svi {h(\eta)} + \lambda(\eta) \log \sui {h(\eta)}<\log a<\log \svi {h(\eta)} + (\lambda(\eta)+1) \log \sui {h(\eta)}.$$
Some basic manipulations lead to $$\lambda(\eta) < \frac{\log a-\log \svi {h(\eta)}}{\log \sui {h(\eta)}} <\lambda(\eta)+1.$$
This may be rewritten  $$\frac{\log a-\log \svi {h(\eta)}}{\log \sui {h(\eta)}} -1 <\lambda(\eta) <\frac{\log a-\log \svi {h(\eta)}}{\log \sui {h(\eta)}}.$$
That is $\lambda(\eta) = \left \lfloor \frac{\log a - \log \svi {h(\eta)}}{\log \sui {h(\eta)}} \right\rfloor$ which is the expected result.

Secondly, let us suppose that phase $\eta$ modifies $\su$. 
The sequence $\sv$ does not change so for every $0 \leq k \leq \lambda(\eta)$, $\svi {h(\eta) + k} = \svi {h(\eta)}$. 
Now, \refl{lem:u_i+k_and_v_i+k} claims that for every $0 \leq k \leq \lambda(\eta)$, $\sui {h(\eta) + k} = \sui {h(\eta)} \left(\frac{\svi {h(\eta)}}{a}\right)^k$.
Phase $\eta$ modifies $\su$ so for every $0 \leq k < \lambda(\eta)$, $a<\sui {h(\eta)} \left(\frac{\svi {h(\eta)}}{a}\right)^{k}\svi {h(\eta)}$ by the definition of $\s$.
This may be rewritten $1<\sui {h(\eta)} \left(\frac{\svi {h(\eta)}}{a}\right)^{k+1}$
This is particularly the case for $k = \lambda(\eta)-1$.
Thus, $1<\sui {h(\eta)} \left(\frac{\svi {h(\eta)}}{a}\right)^{\lambda(\eta)}$.
Phase $\eta$ stops at $t(\eta) = h(\eta)+\lambda(\eta)$ implying that $\sui {h(\eta)} \left(\frac{\svi {h(\eta)}}{a}\right)^{\lambda(\eta)}\svi {h(\eta)}<a$.
This reads $\sui {h(\eta)} \left(\frac{\svi {h(\eta)}}{a}\right)^{\lambda(\eta)+1}<1$.
The two obtained inequalities may be summarized as $$\sui {h(\eta)} \left(\frac{\svi {h(\eta)}}{a}\right)^{\lambda(\eta)+1}<1<\sui {h(\eta)} \left(\frac{\svi {h(\eta)}}{a}\right)^{\lambda(\eta)}.$$
The application of the natural logarithm function to that double inequality gives 
\begin{eqnarray*}
\log \sui {h(\eta)} + (\lambda(\eta)+1) (\log \svi {h(\eta)} - \log a) < 0\\
 <\log \sui {h(\eta)} + \lambda(\eta)(\log \svi {h(\eta)} - \log a).
\end{eqnarray*}
Or 
$$(\lambda(\eta)+1) (\log \svi {h(\eta)} - \log a) <- \log \sui {h(\eta)} < \lambda(\eta)(\log \svi {h(\eta)} - \log a).$$
The three expressions are negative then we rewrite this inequality as
$$\lambda(\eta) (\log a - \log \svi {h(\eta)}) <\log \sui {h(\eta)} < (\lambda(\eta)+1)(\log a -\log \svi {h(\eta)}).$$
Lastly, a division by $\log a -\log \svi {h(\eta)}$ leads to
$$\lambda(\eta)<\frac{\log \sui {h(\eta)}}{ \log a - \log \svi {h(\eta)} }<\lambda(\eta)+1.$$
That is $\frac{\log \sui {h(\eta)}}{ \log a - \log \svi {h(\eta)} }-1<\lambda(\eta)<\frac{\log \sui {h(\eta)}}{ \log a - \log \svi {h(\eta)} } $ 
or 
$$\lambda(\eta) = \left\lfloor \frac{\log \sui {h(\eta)}}{ \log a - \log \svi {h(\eta)} } \right\rfloor.$$
This is what \refl{lem:def_lambda} affirms in the second case.
\end{proof}

In-fine, another manner to explain \refl{lem:def_lambda} may be  $\lambda(k)$ is the highest integer so that $(u_{h(k)})^{\lambda(k)}v_{h(k)}<a$ in the first case and, in the second case, $\lambda(k)$ is the highest integer such that $1<u_{h(k)}\left(\frac{v_{h(k)}}{a}\right)^{\lambda(k)}$.

Next lemma is a direct application of \refl{lem:u_i+k_and_v_i+k} to explain the relationship between the head of a phase and its tail.
\begin{lem}
\label{lem:head_and_tail_of_a_phase}
Consider any phase indexed by $\eta$ of $\s$. 
If this phase modifies $\sv$, then $\sui {t(\eta)} = \sui {h(\eta)}$ and $\svi {t(\eta)} = (\sui {h(\eta)})^{\lambda(\eta)} \svi {h(\eta)}$. 
Reciprocally, if phase $\eta$ modifies $\su$, then 
$\svi {t(\eta)} = \svi {h(\eta)}$ and $\sui {t(\eta)} = \sui {h(\eta)} \left(\frac{\svi {h(\eta)}}{a}\right)^{\lambda(\eta)}$.
\end{lem}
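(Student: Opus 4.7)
The plan is to observe that \refl{lem:head_and_tail_of_a_phase} is essentially a direct specialization of \refl{lem:u_i+k_and_v_i+k} to the situation where the indices $i$ and $i+k$ are chosen to be, respectively, the head $h(\eta)$ and the tail $t(\eta)$ of a single phase. By the very definition of a phase, one of the two sequences $\su$ or $\sv$ stays constant on all indices from $h(\eta)$ up to $t(\eta) = h(\eta)+\lambda(\eta)$, so the hypothesis of one of the two clauses of \refl{lem:u_i+k_and_v_i+k} is automatically satisfied.

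Concretely, I would proceed by splitting on the two types of phase. First, suppose phase $\eta$ modifies $\sv$. Then for every $0 \leq k \leq \lambda(\eta)$ we have $\sui{h(\eta)+k} = \sui{h(\eta)}$, so in particular $\sui{t(\eta)} = \sui{h(\eta)}$; applying \refl{lem:u_i+k_and_v_i+k} with $i = h(\eta)$ and $k = \lambda(\eta)$ yields $\svi{t(\eta)} = \svi{h(\eta)} \cdot (\sui{h(\eta)})^{\lambda(\eta)}$, which is the first claim. Symmetrically, if phase $\eta$ modifies $\su$, then $\svi{t(\eta)} = \svi{h(\eta)}$ and the second clause of \refl{lem:u_i+k_and_v_i+k} applies, giving $\sui{t(\eta)} = \sui{h(\eta)}\bigl(\svi{h(\eta)}/a\bigr)^{\lambda(\eta)}$.

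There is no real obstacle here; the statement is essentially a reformulation of \refl{lem:u_i+k_and_v_i+k} once one has defined $\lambda(\eta)$, $h(\eta)$, $t(\eta)$ and shown $t(\eta)-h(\eta)=\lambda(\eta)$, all of which come from the preceding discussion. The only thing to be slightly careful about is making sure that the invariance of $\su$ (resp.\ $\sv$) throughout the whole phase—not just between two consecutive indices—is explicitly invoked to satisfy the hypothesis of \refl{lem:u_i+k_and_v_i+k}; this is immediate from the construction of the phases in \refd{def:define_s} together with the maximality built into $\lambda(\eta)$ as characterized in \refl{lem:def_lambda}.
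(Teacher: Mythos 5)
Your proposal is correct and matches the paper's own argument: the paper likewise treats this lemma as an immediate specialization of \refl{lem:u_i+k_and_v_i+k} with $i = h(\eta)$ and $k = \lambda(\eta)$, using the fact that only one of $\su$, $\sv$ changes within a phase. Nothing further is needed.
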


\begin{proof}
As announced above, this lemma is an immediate consequence of \refl{lem:u_i+k_and_v_i+k}.
Indeed, by the definition of a phase, only one of the two sequences $\sv$ or $\su$ changes.
Thus each of the results described in \refl{lem:head_and_tail_of_a_phase} are just the particular case of \refl{lem:u_i+k_and_v_i+k} when $k = \lambda(\eta)$.
\end{proof}

The following lemma gives a recursive definition of $\tu$ and $\tv$. 
These two sequences are so inter-dependant, that it is quite impossible to define one without defining the second one.

\begin{lem}
\label{lem:def_tu_and_tv}
Given $1<a<b$ two co-prime integers, the sequences $\tu$ and $\tv$ may be recursively defined by: $\tui 0 = \tvi 0 = \phiab(1)$ and for any $0<i$, 
\begin{itemize}
\item if $(\phiab(1))^2 < a$ then
\begin{equation*}
\begin{array}{l}
\tvi {i+1} = \tvi i \cdot (\tui i)^{\lambda_1}\\
\tui {i+1} = \tui i \cdot \left(\frac{\tvi {i+1}}{a}\right)^{\lambda_2}
\end{array}
\end{equation*}
where $\lambda_1$ is the highest positive integer such that $\tvi i \cdot (\tui i)^{\lambda_1}<a$ and $\lambda_2$ is the highest positive integer such that $1<\tui i \cdot \left(\frac{\tvi {i+1}}{a}\right)^{\lambda_2}$.
\item if $a<(\phiab(1))^2 $ then
\begin{equation*}
\begin{array}{l}
\tui {i+1} = \tui i \cdot \left(\frac{\tvi {i}}{a}\right)^{\lambda_1}\\
\tvi {i+1} = \tvi i \cdot (\tui {i+1})^{\lambda_2}
\end{array}
\end{equation*}
where $\lambda_1$ is the highest positive integer such that $1<\tui i \cdot \left(\frac{\tvi {i}}{a}\right)^{\lambda_1}$ and $\lambda_2$ is the highest positive integer such that $\tvi i \cdot (\tui {i+1})^{\lambda_2}<a$.
\end{itemize} 
\end{lem}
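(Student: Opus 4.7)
The plan is to argue by induction on $i$ using \refl{lem:head_and_tail_of_a_phase} together with the characterization of phase lengths given in \refl{lem:def_lambda}. The base case $\tui 0 = \tvi 0 = \phiab(1)$ is just the definition of the first terms of the extracted subsequences, as described in the paragraphs immediately preceding the lemma statement.

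For the inductive step, I first note that by construction $(\tui i, \tvi i)$ coincides with the tail of phase $2i$ of $\s$ (with $i = 0$ corresponding to $\si 0$, i.e.\ the head of phase $1$). Hence producing $(\tui{i+1}, \tvi{i+1})$ amounts to tracing $\s$ through two consecutive phases, $2i+1$ and $2i+2$, and invoking \refl{lem:head_and_tail_of_a_phase} once on each. Which of $\su, \sv$ gets modified first is locked in by the initial condition: if $(\phiab(1))^2 < a$, then phase $1$ modifies $\sv$, so every odd-indexed phase modifies $\sv$ and every even-indexed one modifies $\su$; if $a < (\phiab(1))^2$, the roles are swapped.

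In the first sub-case, applying \refl{lem:head_and_tail_of_a_phase} to phase $2i+1$ with head $(\tui i, \tvi i)$ yields $\tvi{i+1} = \tvi i \cdot (\tui i)^{\lambda(2i+1)}$, and \refl{lem:def_lambda} identifies $\lambda(2i+1)$ as the largest integer $\lambda_1$ with $\tvi i \cdot (\tui i)^{\lambda_1} < a$. Applying the same lemma to phase $2i+2$, whose head is now $(\tui i, \tvi{i+1})$, gives $\tui{i+1} = \tui i \cdot (\tvi{i+1}/a)^{\lambda(2i+2)}$, with $\lambda_2 = \lambda(2i+2)$ the largest integer for which $\tui i \cdot (\tvi{i+1}/a)^{\lambda_2} > 1$. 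The second sub-case is completely symmetric: phase $2i+1$ first modifies $\su$ using $\tvi i$, and then phase $2i+2$ modifies $\sv$ using the just-computed $\tui{i+1}$.

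The argument is essentially bookkeeping. The one subtle point is checking that $\lambda_1$ and $\lambda_2$ are genuinely \emph{positive} integers as the statement requires, which follows from \refl{lem:u_v_both_change}: each phase moves its target sequence at least once, so no $\lambda(\eta)$ can be zero. The main obstacle is therefore clerical, namely carefully matching the phase indices $2i+1$ and $2i+2$ to the updated pair $(\tui{i+1}, \tvi{i+1})$ in each of the two sub-cases, and keeping straight the alternation between the two sub-cases throughout the induction.
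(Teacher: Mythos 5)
Your proposal is correct and follows essentially the same route as the paper: identify $(\tui i,\tvi i)$ with the tail of phase $2i$ (head of phase $2i+1$), apply \refl{lem:head_and_tail_of_a_phase} to phases $2i+1$ and $2i+2$, and use \refl{lem:def_lambda} to recognize the exponents as $\lambda(2i+1)$ and $\lambda(2i+2)$, with the second sub-case handled symmetrically. Your extra remark on the positivity of $\lambda_1,\lambda_2$ is a small bonus the paper leaves implicit.
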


Even if the statement of this lemma may seem cumbersome, its proof is no less simple.

Describe it before we start its validation.
Since the first phase of the sequence $\s$ may modify either $\sv$ or $\su$ depending on  $(\phiab(1))^2$ compared to $a$, we have two possible behaviours.
Consequently, \refl{lem:def_tu_and_tv} consider these two cases separately.
In the first case, $\sv$ changes on every odd phases while $\su$ is modified during even ones.
So, \refl{lem:def_tu_and_tv} first express $\tvi {i+1}$ from $\tvi i$ and $\tui i$.
Then it gives the expression of $\tui {i+1}$ from $\tui i$ and $\tvi {i+1}$.
In the second case, it is the opposite.
That is every odd phases modify $\su$ when even ones act on $\sv$.
\refl{lem:def_tu_and_tv} reflects this behaviour by giving first the expression of $\tui {i+1}$ from $\tui i$ and $\tvi {i}$.
Then it describes how to obtain $\tvi {i+1}$ from $\tvi i$ and $\tui {i+1}$.

Now, let us introduce the proof of \refl{lem:def_tu_and_tv}.

\begin{proof}
Let us consider the two possible cases separately.
\subparagraph{First Case: $(\phiab(1))^2 < a$}
This condition infers that all odd phases of $\s$ modify $\sv$ while all the even ones change $\su$.
As a consequence, for every $i\in\N-\{0\}$, $\tvi i$ is the last element of phase $2i-1$ and $\tui i$ is the last element of phase $2i$.
This may be described by $\tvi i = \svi {t(2i-1)}$ and $\tui i = \sui {t(2i)}$.

Now, the phases are build so that $t(\eta) = h(\eta+1)$.
This infers that $\svi {t(2i-1)} = \svi {h(2i)}$.

Then, phase $2i$ does not change $\sv$ so $\svi {t(2i)} = \svi {t(2i-1)}$.
Here too the construction of phases applies. 
So $\svi {h(2i+1)} = \svi {t(2i-1)}$ and $\sui {h(2i+1)} = \sui {t(2i)}$.

\refl{lem:head_and_tail_of_a_phase} claims that $\svi {t(2i+1)} = (\sui {h(2i+1)})^{\lambda(2i+1)} \svi {h(2i+1)}$.
More, $\svi {t(2i+1)} = \svi {t(2i-1)} = \tvi i$ and $\sui {h(2i+1)} = \sui {t(2i)} = \tui i$.
Use these equalities in previous equation to obtain  $\svi {t(2i+1)} = (\tui i)^{\lambda(2i+1)} \tvi i$.
Note that $\svi {t(2i+1)} = \tvi {i+1}$ so that $\tvi {i+1} = (\tui i)^{\lambda(2i+1)} \tvi i$.

The definition of $\lambda(2i+1)$ plus \refl{lem:def_lambda} give that $\lambda(2i+1)$ is the highest positive integer such that $(\sui {h(2i+1)})^{\lambda(2i+1)} \svi {h(2i+1)}<a$. 
Then after the substitutions of $\sui {h(2i+1)}$ by  $\tvi {i+1}$ and of $\svi {h(2i+1)}$  by $\tui i$ we obtain that $\lambda_1 = \lambda(2i+1)$.

Now, during phase $2i+1$ $\su$ remains constant. So $\sui {t(2i+1)} = \sui {h(2i+1)}$.
Furthermore, the head of phase $2i+2$ is the tail of phase $2i+1$. 
Thus, \refl{lem:head_and_tail_of_a_phase} infers that $\sui {t(2i+2)} = \sui {h(2i+2)} \left(\frac{\svi {h(2i+2)}}{a}\right)^{\lambda(2i+2)}$. 
The application of the appropriated substitutions as for $\sv$ above gives that $\tui {i+1} = \sui {t(2i+2)} = \tui i \left(\frac{\tvi {i+1}}{a}\right)^{\lambda(2i+2)}$.

The same reasoning as for $\lambda_1$ proves that $\lambda_2 = \lambda(2i+2)$.

Thus, the conclusion of \refl{lem:def_tu_and_tv} in this case holds under the conditions it defines.

\subparagraph{Second Case: $(\phiab(1))^2 < a$}
The same method of proof as in the first case applies also here. 
We have just to correctly reorder the two phases.
The phase $2i-1$ alters sequence $\su$ which gives the expression of $\tui {i+1}$ from $\tui i$ and $\tvi i$ given in \refl{lem:def_tu_and_tv}. 
In this case, also, $\lambda_1 = \lambda(2i+1)$.
Follows a phase that modifies $\sv$ giving $\tvi {i+1}$ as a function of $\tui {i+1}$ and $\tvi i$ as introduced in \refl{lem:def_tu_and_tv}.
Here too, $\lambda_2 = \lambda(2i+2)$.

To conclude that \refl{lem:def_tu_and_tv} holds.
\end{proof}

\refl{lem:def_tu_and_tv} gives a description of $\tu$ and $\tv$ that infers that for each $i\in\N-\{0\}$ $\tui {i+1} \neq \tui i$ and $\tvi {i+1} \neq \tvi i$.
The sequence $\tu$ is extracted from sequence $\su$ which monotonously decreases without any duplication of a term so $\tu$ is a strictly monotonously decreasing sequence.
More, $\tu$ converges because it also admit 1 as a lower bound and it shares the same limit as $\su$.
By the same reasoning, $\tv$ is a strictly monotonously increasing sequence which converges to the same limit as $\sv$.

In the sack of simplicity, let us symmetrize the problem so that we have to deal with two sequences having the same behaviours.

Firstly, observe that $\tvi {i+1}$ is obtained by the product of $\tvi i$ by a power of $\tui i$ or $\tui {i+1}$ following the case.
But, on its side, $\tui {i+1}$ is obtained by the product of $\tui i$ by a power of $\frac{\tvi i}{a}$ or $\frac{\tvi {i+1}}{a}$ depending on the case.
So the idea to manage the sequence $\frac{\tv}{a}$ instead of $\tv$.
Note that, $\frac{\tv}{a}$ is also a strictly monotonously increasing sequence but its upper bound is 1 instead of $a$.

Further, all the terms of $\frac{\tv}{a}$ are lower than one but they are all different from 0.
So the sequence $\frac{a}{\tv}$ contains only finite rational numbers.
Now, as $\tv$ is an strictly monotonously increasing sequence, $\frac{a}{\tv}$ is a strictly monotonously decreasing one.
More, $\frac{\tv}{a}$ admits 1 as a upper bound so, $\frac{a}{\tv}$ admits 1 as a lower bound.

Let us call $\tw = \frac{a}{\tv}$ this sequence and highlight how $\tu$ and $\tw$ interact.

\begin{lem}
\label{lem:def_tu_and_tw}
Given $1<a<b$ two co-prime integers, the sequences $\tu$ and $\tw$ may be recursively defined by $\tui 0 = \phiab(1)$ and $\twi 0 = \frac{a}{\phiab(1)}$ and for any $0<i$, 
\begin{itemize}
\item if $\tui 0<\twi 0$ then
\begin{equation*}
\begin{array}{l}
\twi {i+1} = \frac{\twi i} {(\tui i)^{\lambda_1}}\\
\tui {i+1} = \frac{\tui i} {(\twi {i+1})^{\lambda_2}}
\end{array}
\end{equation*}
where $\lambda_1$ is the highest positive integer such that $(\tui i)^{\lambda_1}<\twi i$ and $\lambda_2$ is the highest positive integer such that $(\twi {i+1})^{\lambda_2}<\tui i$.
\item if $\twi 0<\tui 0$ then
\begin{equation*}
\begin{array}{l}
\tui {i+1} =  \frac{\tui i} {(\twi {i})^{\lambda_1}}\\
\twi {i+1} = \frac{\twi i} {(\tui {i+1})^{\lambda_2}}
\end{array}
\end{equation*}
where $\lambda_1$ is the highest positive integer such that $(\twi {i})^{\lambda_1}<\tui i$ and $\lambda_2$ is the highest positive integer such that $(\tui {i+1})^{\lambda_2}<\twi i$.
\end{itemize} 
\end{lem}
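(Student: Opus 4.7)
The plan is to derive this lemma mechanically from the previous one (\refl{lem:def_tu_and_tv}) via the substitution $\tw = \frac{a}{\tv}$, which is just a change of variable. No new combinatorial content is needed; the main task is bookkeeping to check that the two case distinctions, the recursive formulas, and the definitions of $\lambda_1,\lambda_2$ all transform consistently.

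First I would translate the case condition. By construction $\tui 0 = \phiab(1)$ and $\tvi 0 = \phiab(1)$, hence $\twi 0 = a/\phiab(1)$. The inequality $(\phiab(1))^2 < a$ is equivalent to $\phiab(1) < a/\phiab(1)$, i.e.\ $\tui 0 < \twi 0$; and symmetrically $(\phiab(1))^2 > a$ is equivalent to $\twi 0 < \tui 0$. So the two-case split of \refl{lem:def_tu_and_tv} matches exactly the one stated here.

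Next, in each case I would substitute $\tvi i = \frac{a}{\twi i}$ and $\tvi {i+1} = \frac{a}{\twi {i+1}}$ into the two recursive formulas. In the first case ($\tui 0 < \twi 0$), the equation $\tvi {i+1} = \tvi i \cdot (\tui i)^{\lambda_1}$ rewrites as $\frac{a}{\twi {i+1}} = \frac{a}{\twi i}\cdot (\tui i)^{\lambda_1}$, i.e.\ $\twi {i+1} = \frac{\twi i}{(\tui i)^{\lambda_1}}$; similarly the defining inequality $\tvi i \cdot (\tui i)^{\lambda_1} < a$ becomes $(\tui i)^{\lambda_1} < \twi i$. Likewise $\tui {i+1} = \tui i \cdot \left(\frac{\tvi {i+1}}{a}\right)^{\lambda_2}$ becomes $\tui {i+1} = \frac{\tui i}{(\twi {i+1})^{\lambda_2}}$, and the condition $1 < \tui i \left(\frac{\tvi {i+1}}{a}\right)^{\lambda_2}$ becomes $(\twi {i+1})^{\lambda_2} < \tui i$. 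Exactly the same substitution handles the second case ($\twi 0 < \tui 0$), with the roles of $\tu$ and $\tv$ (now $\tw$) swapped in order, yielding the stated expressions for $\tui {i+1}$ and then $\twi {i+1}$.

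There is no real obstacle, since $\tvi i \in \Fab$ implies $1 < \tvi i < a$, so $\twi i = a/\tvi i$ is a well-defined positive real, every division used above is valid, and the maximality of each $\lambda_j$ is preserved by the strictly monotone substitution $x \mapsto a/x$ on $(1,a)$. Once the two cases are verified term by term, the lemma follows.
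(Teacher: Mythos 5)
Your proposal is correct and follows essentially the same route as the paper: translate the case condition $(\phiab(1))^2<a$ into $\tui 0<\twi 0$, then substitute $\tvi i = a/\twi i$ into the recursions and the defining inequalities of $\lambda_1,\lambda_2$ from \refl{lem:def_tu_and_tv}, treating the second case symmetrically. No gap to report.
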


\begin{proof}
The proof of this lemma relies on \refl{lem:def_tu_and_tv}.
Thus, let us build a proof by cases based on \refl{lem:def_tu_and_tv} statement.

Before going further in the proof, by definition $\tw=\frac{a}{\tv}$ inferring that $\tv = \frac{a}{\tw}$ as none of the $\tvi i$ and $\twi i$ can be null. That is, for every $i\in\N,~\tvi i = \frac{a}{\twi i}$.

The base case may be immediately obtained from the definition of $\tw$. Indeed, $\tu$ is the same between the two lemmas so $\tui 0 = \phiab(1)$. Now, $\tw=\frac{a}{\tv}$ by its definition so $\twi 0 = \frac{a}{\tvi 0} = \frac{a}{\phiab(1)}$.

Now, let us take care of the equivalence of the condition that separates both cases used in \refl{lem:def_tu_and_tv} with the one used in \refl{lem:def_tu_and_tw}.
In \refl{lem:def_tu_and_tv}, the criterion is $(\phiab(1))^2<a$.
We can consider that it is $\tui 0 \tvi 0 < a$ as it was the case for $\su$ and $\sv$.
If we replace $\tvi 0$ by its expression $\frac{a}{\twi 0}$, we get $\tui 0 < \twi 0$ after some manipulations of the inequality. 
This is the criterion of \refl{lem:def_tu_and_tw}.

Reason now by case.

\subparagraph{Case 1: $(\phiab(1))^2 < a$}
In this case \refl{lem:def_tu_and_tv} claims that $$\tvi {i+1} = \tvi i (\tui i)^{\lambda_1}$$ where $\lambda_1$ is the highest positive integer such that $\tvi i (\tui i)^{\lambda_1} < a$. 

Focus on the description of $\tvi {i+1}$.
A substitution of $\tv$ by $\frac{a}{\tw}$ in this assertion gives $\frac{a}{\twi {i+1}} = \frac{a}{\twi {i}} (\tui i)^{\lambda_1} $. 
A simplification by $a$ followed by an inversion of the equation lead to $\twi {i+1} =  \frac{\twi {i}}{(\tui i)^{\lambda_1}}$.
This is the expression of $\twi {i+1}$ we are waiting for in this case.

Now, focus on the definition of $\lambda_1$. 
\refl{lem:def_tu_and_tv} defines $\lambda_1$ as the highest positive integer such that $\tvi i (\tui i)^{\lambda_1} < a$.
Replace $\tv$ by $\frac{a}{\tw}$ in the condition to obtain $\frac{a}{\twi {i}} (\tui i)^{\lambda_1} < a$.
A multiplication of this inequality by $\frac{\twi {i}}{a} > 0$ 
leads to  $(\tui i)^{\lambda_1}<\twi i$.
So, $\lambda_1$ is the highest positive integer such that $(\tui i)^{\lambda_1}<\twi i$.
This is the definition of $\lambda_1$ given by \refl{lem:def_tu_and_tw} in this case.

Furthermore, in this case, \refl{lem:def_tu_and_tv} states that $\tui {i+1} = \tui i \cdot \left(\frac{\tvi {i+1}}{a}\right)^{\lambda_2}$ where $\lambda_2$ is the highest positive integer such that $1<\tui i \cdot \left(\frac{\tvi {i+1}}{a}\right)^{\lambda_2}$.

Here too, focus firstly on the definition of $\tui {i+1}$.
Again, substitute $\tv$ by $\frac{a}{\tw}$ in this assertion.
We get $\tui {i+1} = \tui i \cdot \left(\frac{\frac{a}{\twi {i+1}}}{a}\right)^{\lambda_2}$.
We can simplify the fraction by $a$. 
Then a simple rewriting leads to $\tui {i+1} = \frac{\tui i }{(\twi {i+1})^{\lambda_2}}$. 
This is the expression of $\tui {i+1}$ given by \refl{lem:def_tu_and_tw}.

To finish with this case, consider the definition of $\lambda_2$.
\refl{lem:def_tu_and_tv} defines $\lambda_2$ as the highest positive integer such that $1<\tui i \cdot \left(\frac{\tvi {i+1}}{a}\right)^{\lambda_2}$.
Once more, substitute $\tv$ by $\frac{a}{\tw}$ in this assertion.
This gives $1<\tui i \cdot \left(\frac{\frac{a}{\twi {i+1}}}{a}\right)^{\lambda_2}$.
Simplify the fraction then apply some basic rewritings to get $(\twi {i+1})^{\lambda_2}<\tui i$, the condition that defines $\lambda_2$ in \refl{lem:def_tu_and_tw} in this case.

\subparagraph{Case 2: $a<(\phiab(1))^2 < a^2$}
In the same way as for the first case, it is possible to establish conclusions of \refl{lem:def_tu_and_tw} for this case.

Indeed, the intermediate results are the same.
We just need to exchange the indices $i \leftrightarrow i+1$ in the correct places and exchange $\lambda_1 \leftrightarrow \lambda_2$ and we are done.

In conclusion \refl{lem:def_tu_and_tw} holds.
\end{proof}

Now comes a lemma that confirms that both sequences decrease and admit 1 as a lower bound. It also introduces the conservation of the initial order of $\tui 0$ and $\twi 0$.
\begin{lem}
\label{lem:tu_and_tw_decrease}
Given $a$ and $b$ two integers verifying $1<a<b$. Define $\tu$ and $\tw$ as in \refl{lem:def_tu_and_tw}.

For any $i\in\N$, if $\tui 0<\twi 0$ then $$1<\tui {i+1}<\twi {i+1}<\tui i<\twi i$$ else $$1<\twi {i+1}<\tui {i+1}<\twi i<\tui i.$$
\end{lem}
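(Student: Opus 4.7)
The plan is to proceed by induction on $i$, treating the case $\tui 0<\twi 0$ in detail; the case $\twi 0<\tui 0$ is handled by exactly the same argument with the roles of $\tu$ and $\tw$ swapped. The underlying mechanism used at every step is the following: whenever $1<x<y$ and $\lambda$ is the largest positive integer satisfying $x^\lambda<y$, one has
\begin{equation*}
x^\lambda<y<x^{\lambda+1},
\end{equation*}
so that dividing by $x^\lambda$ places the quotient $y/x^\lambda$ strictly between $1$ and $x$.

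For the base case $i=0$, the hypothesis $1<\tui 0<\twi 0$ guarantees that the integer $\lambda_1$ from \refl{lem:def_tu_and_tw} is at least $1$; by the mechanism above applied to $(\tui 0,\twi 0)$, we get $1<\twi 1<\tui 0$. Since now $1<\twi 1<\tui 0$, the analogous step applied to the pair $(\twi 1,\tui 0)$ yields $\lambda_2\geq 1$ and $1<\tui 1<\twi 1$, so altogether $1<\tui 1<\twi 1<\tui 0<\twi 0$. The inductive step is identical in form: assuming $1<\tui{i+1}<\twi{i+1}<\tui i<\twi i$, two applications of the mechanism — first to $(\tui{i+1},\twi{i+1})$ to produce $\twi{i+2}$, then to $(\twi{i+2},\tui{i+1})$ to produce $\tui{i+2}$ — give $1<\tui{i+2}<\twi{i+2}<\tui{i+1}$, and chaining with the inductive hypothesis completes the proof.

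The only non-trivial point is justifying that the upper inequality $y<x^{\lambda+1}$ is strict rather than the weak $\leq$ that the extremal definition of $\lambda$ immediately provides; if only $\leq$ held, the resulting quotient could equal $x$ and the strict ordering would fail. At each step this amounts to ruling out equalities such as $(\tui i)^{\lambda_1+1}=\twi i=a/\tvi i$, equivalently $\tvi i\cdot(\tui i)^{\lambda_1+1}=a$. Writing $\tvi i=a^{q_1}/b^{p_1}$ and $\tui i=a^{q_2}/b^{p_2}$ as granted by $\Fab$ membership (\refl{lem:s_in_FxF}) with $p_1,p_2\geq 0$, the coprimality $\gcd(a,b)=1$ forces $p_1+(\lambda_1+1)p_2=0$, hence $p_1=p_2=0$, so $\tvi i$ and $\tui i$ would be powers of $a$ lying in $[1,a)$, i.e.\ both equal to $1$, contradicting $\tui i,\tvi i>1$. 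This is the same strictness mechanism that implicitly powers \refl{lem:def_lambda}, and no further ingredient is required.
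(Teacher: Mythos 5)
Your proof follows essentially the same route as the paper's: induction, using the maximality of $\lambda_1$ and $\lambda_2$ to sandwich $\twi i$ between consecutive powers of $\tui i$ (and then $\tui i$ between powers of $\twi {i+1}$) and dividing to place each new term strictly between $1$ and the previous one. Your extra coprimality argument ruling out equalities like $\tvi i\cdot(\tui i)^{\lambda_1+1}=a$ justifies a strict upper inequality that the paper simply asserts, so it is a welcome refinement rather than a different approach.
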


\begin{proof}
As described in \refl{lem:def_tu_and_tw}, the two possible cases are totally symmetric. 
So it is sufficient to build the proof for one of the two cases. 
The proof for the other one is the same with just some minor adaptations.

Both of $\tui 0$ and $\twi 0$ are strictly greater than 1.
Indeed, $\tui 0 = \phiab(1)$ and by the definition of $\phiab$ only $\phiab(0) = 1$ and $\phiab(p)>1$ for any $p\neq 0$.
Concerning $\twi 0 = \frac{a}{\phiab(1)}$, the definition of $\phiab$ infers that $1<\phiab(1)<a$. 
So, $\frac{1}{a}<\frac{1}{\phiab(1)}<1$ that is $1<\frac{a}{\phiab(1)}<a$.

Now, assume that $1<\tui 0 < \twi 0$ and let us prove by induction that for any $i\in\N$, $1<\tui {i+1}<\twi {i+1}<\tui i<\twi i$.

Take any $i\in\N$ and suppose that $1<\tui i < \twi i$. 
Prove that $1<\tui {i+1}<\twi {i+1}<\tui i<\twi i$.

This is the first case of \refl{lem:def_tu_and_tw}. 
The exponent $\lambda_1$ is defined as the highest integer so that $(\tui i)^{\lambda_1}<\twi i$.
This may be written as $(\tui i)^{\lambda_1}<\twi i<(\tui i)^{\lambda_1+1}$.
Divide this by $(\tui i)^{\lambda_1}$ to get $1<\frac{\twi i}{(\tui i)^{\lambda_1}}<\tui i$.
Note that \refl{lem:def_tu_and_tw} says that $\twi {i+1} = \frac{\twi i} {(\tui i)^{\lambda_1}}$ so we get $1<\twi {i+1}<\tui i$.
Now $\lambda_2$ is defined as the highest positive integer such that $(\twi {i+1})^{\lambda_2}<\tui i$.
By the same way, $(\twi {i+1})^{\lambda_2}<\tui i<(\twi {i+1})^{\lambda_2+1}$.
Divide this by $(\twi {i+1})^{\lambda_2}$ to get $1<\frac{\tui i}{(\twi {i+1})^{\lambda_2}}<\twi {i+1}$.
As \refl{lem:def_tu_and_tw} states that $\frac{\tui i}{(\twi {i+1})^{\lambda_2}} = \tui {i+1}$, we obtain
$1<\tui {i+1}<\twi {i+1}<\tui i < \twi i$.

In conclusion of this, it is sufficient that $1<\tui i < \twi i$ to get $1<\tui {i+1}<\twi {i+1}<\tui i < \twi i$.
This conclusion includes $1<\tui {i+1}<\twi {i+1}$, the hypothesis of induction at rank $i+1$.

Now we can apply this to $i=0$ which gives $1<\tui {1}<\twi {1}<\tui 0 < \twi 0$.
By induction we proved the first case of \refl{lem:tu_and_tw_decrease}.

As said above, the proof for the second case is the same.
\end{proof}

Now, both of the sequences strictly decrease and admit a lower bound, so they converge. Let us prove that they share the same limit.

\begin{lem}
\label{lem:tu_and_tw_have_the_same_limit}
For any $1<a<b$ two co-prime integers, the two sequences $\tu$ and $\tw$ have the same limit.
\end{lem}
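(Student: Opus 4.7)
The plan is to first establish existence of both limits and then pinch them together using the interleaving of the two sequences already provided by \refl{lem:tu_and_tw_decrease}.

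For existence, \refl{lem:tu_and_tw_decrease} guarantees that each of $\tu$ and $\tw$ is strictly decreasing and bounded below by $1$. The monotone convergence theorem then produces limits $l_u \geq 1$ for $\tu$ and $l_w \geq 1$ for $\tw$. This step is essentially free, and I would dispatch it in a single sentence.

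For the equality $l_u = l_w$, I would exploit the chain $1<\tui{i+1}<\twi{i+1}<\tui i<\twi i$ provided by \refl{lem:tu_and_tw_decrease} in the case $\tui 0 < \twi 0$ (the opposite case being symmetric). From this chain, two inequalities valid for every $i\in\N$ can be extracted in isolation: $\tui i < \twi i$, which passes to the limit as $l_u \leq l_w$, and $\twi{i+1} < \tui i$, which passes to the limit as $l_w \leq l_u$. Combining these forces $l_u = l_w$. The symmetric case $\twi 0 < \tui 0$ is handled by exchanging the roles of $\tu$ and $\tw$ in exactly the same argument, since \refl{lem:tu_and_tw_decrease} supplies a mirrored chain of inequalities.

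I do not foresee a significant obstacle here. The real work has already been invested in \refl{lem:tu_and_tw_decrease}, which arranges the terms of $\tu$ and $\tw$ into a single decreasing interleaved pattern $\twi 0 > \tui 0 > \twi 1 > \tui 1 > \cdots$ (or the symmetric one); once this interleaving is available, the equality of the two limits is a routine consequence of passing to the limit in a pair of opposite inequalities. The only point worth emphasizing in the write-up is that no quantitative control on the ratio $\tui i / \twi i$ is required, only the qualitative interleaving itself.
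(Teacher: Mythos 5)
Your proposal is correct and follows essentially the same route as the paper: both proofs invoke \refl{lem:tu_and_tw_decrease} to get convergence (monotone and bounded below by $1$) and then pass to the limit in the two interleaving inequalities $\tui i < \twi i$ and $\twi{i+1} < \tui i$ to squeeze the limits together, treating the second case by symmetry. No gap to report.
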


\begin{proof}
We know that $\tu$ and $\tw$ both converge. Say that $\lim\limits_{i\to+\infty} \tui i = l_1$ and $\lim\limits_{i\to+\infty} \twi i = l_2$.
Let us consider the first case of \refl{lem:def_tu_and_tw}.
In this case, \refl{lem:tu_and_tw_decrease} claims that for any $i\in\N$, $\tui i < \twi i$.
So $\tw$ is an upper bound for $\tu$ inferring that $l_2\geq l_1$.
But, the same lemma also states that for any $i$, $\twi {i+1}<\tui i$ implying that $\tu$ is an upper bound of the subsequence of $\tw$ starting at $i=1$.
As a consequence $l_1\geq l_2$.
Then the only possibility we have is $l_2 = l_1$.

The same method of proof applied to the second case of \refl{lem:def_tu_and_tw} leads to the same result.
So, \refl{lem:tu_and_tw_have_the_same_limit}.
\end{proof}

Now introduce the last lemma of this section.

\begin{lem}
\label{lem:tu_and_tw_converge_to_1}
Let $1<a<b$ be two co-prime integers an $\tu$ and $\tw$ the two sequences defined in \refl{lem:def_tu_and_tw}. Then
$$\lim\limits_{i\to+\infty} \tui i = \lim\limits_{i\to+\infty} \twi i = 1.$$
\end{lem}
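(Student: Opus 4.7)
The plan is to determine the common limit by a short argument by contradiction, leveraging the structural lemmas already established. First, combine Lemma \ref{lem:tu_and_tw_decrease}, which shows that both $\tu$ and $\tw$ are strictly decreasing and bounded below by $1$, with Lemma \ref{lem:tu_and_tw_have_the_same_limit} to conclude that $\tu$ and $\tw$ both converge to a common limit $l \geq 1$. The task then reduces to ruling out the possibility $l > 1$.

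Suppose for contradiction that $l > 1$, and without loss of generality work in the first case of Lemma \ref{lem:def_tu_and_tw}, where $\tui 0 < \twi 0$; the other case is symmetric with the roles of $\tu$ and $\tw$ swapped. By definition, $\lambda_1$ at step $i$ is the largest positive integer with $(\tui i)^{\lambda_1} < \twi i$. Note that Lemma \ref{lem:tu_and_tw_decrease} guarantees $\tui i < \twi i$ at every step, so $\lambda_1 \geq 1$ is always well-defined. Since both sequences converge to $l$, the ratio $\twi i / (\tui i)^2$ tends to $1/l < 1$, so for all sufficiently large $i$ we have $(\tui i)^2 > \twi i$, which forces $\lambda_1 = 1$.

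Once $\lambda_1 = 1$ for all large enough $i$, the recursion of Lemma \ref{lem:def_tu_and_tw} simplifies to $\twi{i+1} = \twi i / \tui i$. Passing to the limit on both sides yields $l = l/l = 1$, contradicting $l > 1$. Therefore $l = 1$, which proves the lemma. The real conceptual content, rather than an obstacle, is the observation that convergence to a limit strictly above $1$ is incompatible with a nontrivial stride $\lambda_1$: once the sequences get close to $l$, the recursion can only afford a single factor per step, and that single factor then pins the limit at $1$ through the algebraic identity $l = l/l$.
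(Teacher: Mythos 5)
Your proof is correct and takes essentially the same route as the paper: a contradiction argument from the assumption that the common limit $l$ (guaranteed by \refl{lem:tu_and_tw_decrease} and \refl{lem:tu_and_tw_have_the_same_limit}) exceeds $1$, exploited through the recursion $\twi{i+1}=\twi i/(\tui i)^{\lambda_1}$. The only difference is bookkeeping: you first pin $\lambda_1=1$ for large $i$ and then pass to the limit in $\twi{i+1}=\twi i/\tui i$, whereas the paper keeps $\lambda_1\geq 1$ arbitrary and uses the $\varepsilon$-sandwich of \refp{prop:sequence_convergence} to force $\twi{i+1}<l$, contradicting the monotone decrease of $\tw$ to $l$; both arguments hinge on the same estimate.
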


\begin{proof}
Let us prove \refl{lem:tu_and_tw_converge_to_1} by contradiction.

Suppose that there exist $l\in\R,~l>1$ so that $\lim\limits_{i\to+\infty} \tui i = \lim\limits_{i\to+\infty} \twi i = l.$
This limit is supposed greater than 1 so that we can define $\delta>0$ such that $l = 1+\delta$.

Consider the first case of \refl{lem:def_tu_and_tw}.
By hypothesis, $\lim\limits_{i\to+\infty} \twi i = l$.
\refp{prop:sequence_convergence} claims that for every $\varepsilon>0$, there exist $N\in\N$ such that for every integer $i>N$, $\twi i < l(1+\varepsilon)$.
Take such an $0<\varepsilon<\delta$.
Take any $N$ so that for every integer $i>N$, $ \twi i < l(1+\varepsilon)$.
Take now any $i>N$.
Thus, $l < \twi i < l(1+\varepsilon)$ because $\tw$ monotonously decreases to $l$.
Now, $\tui i < \twi i$ by \refl{lem:tu_and_tw_decrease}.
More, $\tu$ also decreases and admit $l$ as limit by hypothesis.
Thus $l < \tui i < l(1+\varepsilon)$.
Define now, $\lambda_1$ as the highest integer so that $(\tui i)^{\lambda_1}<\twi i$
So, we have $\frac{1}{(l(1+\varepsilon))^{\lambda_1}}<\frac{1}{(\tui i)^{\lambda_1}}<\frac{1}{l^{\lambda_1}}$.
Finally we obtain $\frac{l}{(l(1+\varepsilon))^{\lambda_1}}<\frac{\twi i}{(\tui i)^{\lambda_1}}<\frac{l(1+\varepsilon)}{l^{\lambda_1}}$.
Essentially, this infers that $\twi {i+1} < \frac{1+\varepsilon}{l^{\lambda_1-1}}$.
In the worst case, $\lambda_1 = 1$ giving that $\twi {i+1} < 1+\varepsilon$ in any case.
As $\varepsilon$ has been chosen so that $\varepsilon<\delta$ thus $\twi {i+1} < 1+\delta = l$.
Now, $\tw$ strictly decreases inferring that $l$ cannot be its limit denying the hypothesis that it is its limit.
So, $\tw$ cannot have a limit $l>1$ but it accepts a limit $l\geq1$. So its limit is 1.
By \refl{lem:tu_and_tw_have_the_same_limit}, we also get that $\lim\limits_{i\to+\infty} \tui i = \lim\limits_{i\to+\infty} \twi i = 1$ in this case, the conclusion of \refl{lem:tu_and_tw_converge_to_1}.

Here too, the same method of proof applies to the second case.

To conclude that \refl{lem:tu_and_tw_converge_to_1} holds.
\end{proof}

From \refl{lem:tu_and_tw_converge_to_1} it is possible to obtain the proof of \reft{thm:u_tendsto_1_v_to_2}.

\begin{proof} [Proof of  \reft{thm:u_tendsto_1_v_to_2}]
Start by the limit of $\su$.
By \refl{lem:tu_and_tw_converge_to_1} $\lim\limits_{i\to+\infty} \tui i = 1$.
Now, \refl{lem:u_and_v_converge} we know that $\su$ also converges.
As $\tu$ is an infinite subsequence of $\su$ both must have the same limit.
Thus, $\lim\limits_{i\to+\infty} \sui i = 1$.

Concerning $\sv$ now.
We have by \refl{lem:tu_and_tw_converge_to_1} $\lim\limits_{i\to+\infty} \twi i = 1$.
By the definition of $\tw$, $\tv = \frac{a}{\tw}$.
The function $x \to \frac{a}{x}$ is continue on $\R-\{0\}$ and especially around 1.
Thus, $\lim\limits_{i\to+\infty} \tvi i = \frac{a}{\lim\limits_{i\to+\infty} \twi i} = a$.
Finally, $\sv$ also converges by \refl{lem:u_and_v_converge}.
In conclusion, $\lim\limits_{i\to+\infty} \svi i = \lim\limits_{i\to+\infty} \tvi i = a$ because $\tv$ is a subsequence of $\sv$.
In summary, the conclusion of \reft{thm:u_tendsto_1_v_to_2} holds.
\end{proof}

\section{$\Fab$ is dense in $[1,a]_\R$}
In this section, we introduce the main theorem of this paper and two of its corollaries.
Let $[1,a]_\R$ denote the set of all real numbers $x$ verifying $1\leq x \leq a$.
We claim the following theorem.
\begin{theorem}
\label{thm:F_dense_in_1_a}
For any co-prime integers $a$ and $b$ verifying $1<a<b$, the set $\Fab$ is dense in $[1,a]_\R$.
\end{theorem}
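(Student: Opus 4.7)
The plan is to exploit the sequence $\su$ of minimum record holders, which converges to $1$ by \reft{thm:u_tendsto_1_v_to_2}, together with the multiplicative description of the law $\star$ given in \reft{thm:star_is_merely_the_product}. Given $x\in[1,a]_\R$ and $\varepsilon>0$, the first move is simply to choose an index $i$ with $1<\sui i<1+\varepsilon/a$; this is possible since $\sui i\to 1$ while every $\sui i>1$.

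The technical heart of the argument is to show that every ordinary power $\sui i^k$ satisfying $\sui i^k<a$ already belongs to $\Fab$. This is a short induction on $k$: for $k=1$ it is the definition of $\sui i$, and if $\sui i^k\in\Fab$ with $\sui i^{k+1}<a$, then $\sui i^k\cdot \sui i<a$, so the first branch of \reft{thm:star_is_merely_the_product} identifies $\sui i^k\star\sui i$ with $\sui i^{k+1}$, and the closure from \reft{thm:star_is_a_monoid_law} places $\sui i^{k+1}$ in $\Fab$. The borderline case $\sui i^k=a$ is excluded: it would force $a^{k(p+\dab(p))-1}=b^{kp}$ with $p=\phiab^{-1}(\sui i)\geq 1$, contradicting $\gcd(a,b)=1$ in the fashion of \refp{prop:1_the_only_integer}.

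The last step is a squeeze. Let $K$ be the largest nonnegative integer with $\sui i^K\leq x$; the previous step guarantees $f:=\sui i^K\in\Fab$. Either $\sui i^{K+1}\leq a$, in which case $\sui i^K\leq x<\sui i^{K+1}$ yields $|x-f|\leq\sui i^K(\sui i-1)\leq a(\sui i-1)<\varepsilon$, or else $\sui i^{K+1}>a\geq x$, in which case $\sui i^K>a/\sui i$ and the analogous bound $|x-f|\leq a-\sui i^K<a(\sui i-1)<\varepsilon$ applies. Either way the desired approximation is achieved.

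The main obstacle is the closure assertion of the second step: verifying that iterating $\star$ on a single factor $\sui i$ never triggers the corrective branch of \reft{thm:star_is_merely_the_product} so long as the accumulated product stays under $a$. Once that closure is in hand, the approximation collapses to the standard fact that a geometric progression with common ratio close to $1$ densely sweeps out a bounded interval, rendered here completely elementary by the estimates above.
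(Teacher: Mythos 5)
Your proof is correct, but it follows a genuinely different route from the paper's. The paper fixes $x\in]1,a[_\R$ and runs a greedy construction: it starts from $\sigma_0=\sup\{\sui i:\sui i<x\}$, repeatedly multiplies by the largest admissible term of $\su$ below $x/\sigma_n$ (using $\star$), obtains an increasing sequence of $\Fab$ bounded by $x$, and then rules out a limit $l<x$ by a contradiction argument (the same factor $\sui {i_{n+1}}>1$ would have to be reused forever). You instead fix a single element $\sui i\in\Fab$ with $1<\sui i<1+\varepsilon/a$ (possible by \reft{thm:u_tendsto_1_v_to_2}), prove by induction via the first branch of \reft{thm:star_is_merely_the_product} and the closure in \reft{thm:star_is_a_monoid_law} that all powers $\sui i^k<a$ lie in $\Fab$ (the borderline $\sui i^k=a$ being impossible by coprimality), and then squeeze $x$ between consecutive powers, getting $|x-\sui i^K|\leq a(\sui i-1)<\varepsilon$; your case split on $\sui i^{K+1}\lessgtr a$ is in fact unnecessary, since $\sui i^K\leq x<\sui i^{K+1}$ holds by maximality of $K$ in both cases, but it does no harm. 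What your approach buys is a direct, finite $\varepsilon$-approximation by a geometric grid, avoiding both the bespoke sequence for each $x$ and the limit-by-contradiction step; what the paper's approach buys is an explicit sequence of $\Fab$ converging to each $x$, built only from terms of $\su$. Both arguments rest on the same key input, namely $\sui i\to 1$, and both (like the paper) should note the trivial converse inclusion $\Fab\subset[1,a]_\R$ to conclude $\overline{\Fab}=[1,a]_\R$; your write-up leaves that half implicit, which is a presentational point rather than a gap.
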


\begin{proof}
The segment $[1,a]_\R$ with the usual topology of $\R$, is a metric space.
Thus we have to prove that the closure $\overline{\Fab}$ of $\Fab$ is $[1,a]_\R$.

First, prove by contradiction that $\overline{\Fab}\subset [1,a]_\R$.
Suppose their exist $l>a$ that is the limit of any convergent sequence $\sigma$ of $\Fab$.
This hypothesis infers that there exist an infinity of $\sigma_i$ as near as we want of $l$.
So there must exist an infinity of $\sigma_i\in\Fab$ such that $a<\sigma_i<l+\varepsilon$.
By definition of $\Fab$, this is impossible.
So, there is no sequence of $\Fab$ that converges to any $l>2$.

The same reasoning leads to that any $l<1$ cannot be the limit of any sequence of $\Fab$.
Thus $\overline{\Fab} \subset [1,a]_\R$.

Reciprocally, by the definition of the closure of set, $\Fab\subset\overline{\Fab}$.
\reft{thm:u_tendsto_1_v_to_2} infers that $a\in\overline{\Fab}$ and $1\in\Fab\subset \overline{\Fab}$.

Now, take any $x\in ]1,a[_\R,~x\not\in\Fab$ and prove that $x$ may be the limit of at least one sequence of $\Fab$.

Let us build such a sequence $\sigma$.
The real number $x$ verifies $x>1$ so that there exist an infinity of $\sui i$, $1<\sui i<x$ by the fact that $\su$ converges to 1.
We cannot have $\sui i=x$ because $x\not\in\Fab$.
Let us write that $\sigma_{0} = \sup\{\sui i: i\in\N \textrm { and } \sui i<x \}$.
Let us define $\sigma_{i+1}$ from $\sigma_i$ for any $i\in\N$ now.

\paragraph{Base case}
Consider $\frac{x}{\sigma_0}$.
By the construction of $\sigma_0$, $\sigma_0 < x$.
Then $1<\frac{x}{\sigma_0}$. 
By the convergence of $\su$ to 1, there exist an infinity of $\sui i<\frac{x}{\sigma_0}$.
Define $i_1 = \inf\{i\in\N: \sui i<\frac{x}{\sigma_0} \}$.
Then $\sigma_0\star\sui {i_1}\in\Fab$ by the definition of $\star$.
$\sigma_0\cdot\sui {i_1} < \sigma_0 \cdot \frac{x}{\sigma_0}$ by the definition of $i_1$.
Thus $\sigma_0\cdot\sui {i_1} < x < a$ so $\sigma_0\star\sui {i_1} = \sigma_0\cdot\sui {i_1}$.
More, as $\sui {i_1}>1$, $\sigma_0<\sigma_0\cdot\sui {i_1}$.
Define $\sigma_1 = \sigma_0\cdot\sui {i_1}$.

The same process may be iterated to define $i_2$ and $\sigma_2$ from $\sigma_1$, then $i_3$ and $\sigma_3$ from $\sigma_2$, $\cdots $, $i_{j+1}$ and $\sigma_{j+1}$ from $\sigma_j$, and so on.

The sequence $\sigma$ build such a way is a strictly monotonously increasing sequence. More, it accepts $x$ as an upper bound. 
So $\sigma$ converges to a limit lower than or equal to $x$.

Now, suppose that $\sigma$ converges to a limit $l<x$ and define $\delta>0$ such that $\frac{x}{l} = 1 + \delta$.
Take any $\varepsilon, ~0<\varepsilon<\delta$.
By \refp{prop:sequence_convergence} and the hypothesis that $\sigma$ converges to $l$, there exist $N\in\N$ such that for every $n>N$, $\frac{1}{1+\varepsilon} l< \sigma_n$.
Take the lowest $n$ verifying $\frac{1}{1+\varepsilon} l < \sigma_n$ and have a look at the following terms of $\sigma$.

The construction of $\sigma$ defines $i_{n+1} = \inf\{i\in\N: \sui i<\frac{x}{\sigma_n} \}$ and $\sigma_{n+1} = \sigma_n\cdot\sui {i_{n+1}}$.
This also means that $\sui {i_{n+1}}$ is the greatest element of $\su$ such that $\sui i<\frac{x}{\sigma_n}$ because $\su$ decreases.

Now, as $\sigma$ monotonously increases and is supposed to converge to $l$, we need to have $ \sigma_{n+1} = \sigma_n\cdot\sui {i_{n+1}}<l$.
More, $\frac{1}{1+\varepsilon} l< \sigma_n$ so that $\frac{1}{1+\varepsilon} l\cdot\sui {i_{n+1}} < l$.
Thus $\sui {i_{n+1}} < 1+\varepsilon$.
In addition, $1+\delta < \frac{x}{\sigma_{n+1}} < \frac{x}{\sigma_{n}}$ because $l>\sigma_{n+1} > \sigma_{n}$.
So $\sui {i_{n+1}} < \frac{x}{\sigma_{n+1}}$.
There cannot exist an $i< i_{n+1}$ such that $\sui i < \frac{x}{\sigma_{n+1}}$ otherwise $\sui i < \frac{x}{\sigma_{n}}$ with $i<i_{n+1}$ which denies the definition of $i_{n+1}$.
Thus, $i_{n+2} = i_{n+1}$ and $\sigma_{n+2}=\sigma_{n+1}\cdot \sui {i_{n+1}} =  \sigma_{n+1}\cdot (\sui {i_{n+1}})^2$.
As $\sigma$ is supposed to converge to $l$, this feature must hold indefinitely.
Only $\sui {i_{n+1}} = 1$ would permit this but this is impossible.
So $l$ cannot be the limit of $\sigma$.
Thus $\sigma$ can only converge to $x$.

In conclusion, $x\in\overline{\Fab}$.
As this is valid for any $x\in [1,a]_\R$, we obtain that $[1,a]_\R \subset \overline{\Fab}$ and, as a consequence, $[1,a]_\R = \overline{\Fab}$.

So $\Fab$ is dense in $[1,a]_\R$.
\end{proof}

Now that we state \reft{thm:F_dense_in_1_a}, we can highlight some of its extensions.

Define $a^k\Fab = \{a^k \cdot f: f\in\Fab\}$ for any positive integer $k$.
An immediate consequence of \reft{thm:F_dense_in_1_a} is that $a^k\Fab$ is dense in $[a^k,a^k+1]_\R$.
So 
\begin{coro}
Given $a$ and $b$ two co-prime positive integers satisfying $1<a<b$,
$$\bigcup\limits_{k\in\N} a^k\Fab = [1,+\infty[_\R.$$
\end{coro}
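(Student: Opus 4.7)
The plan is to reduce the corollary to \reft{thm:F_dense_in_1_a} by exploiting that, for each fixed $k\in\N$, multiplication by $a^k$ is a homeomorphism of the positive reals onto themselves. This dilation sends $[1,a]_\R$ bijectively onto $[a^k,a^{k+1}]_\R$ and, by the very definition of $a^k\Fab$, it sends $\Fab$ onto $a^k\Fab$. Since a homeomorphism preserves density, \reft{thm:F_dense_in_1_a} immediately yields that each $a^k\Fab$ is dense in $[a^k,a^{k+1}]_\R$.

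Next I would cover $[1,+\infty[_\R$ by the intervals $[a^k,a^{k+1}]_\R$. Since $a>1$, the sequence $(a^k)_{k\in\N}$ is strictly increasing with $a^0=1$ and $\lim_{k\to+\infty}a^k=+\infty$, so $\bigcup_{k\in\N}[a^k,a^{k+1}]_\R=[1,+\infty[_\R$. For any $x\in[1,+\infty[_\R$, I would choose $k=\lfloor\log x/\log a\rfloor$, which places $x$ inside $[a^k,a^{k+1}]_\R$, and then use the density of $a^k\Fab$ in that interval to extract a sequence of $a^k\Fab\subseteq\bigcup_{j\in\N}a^j\Fab$ converging to $x$. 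This establishes $[1,+\infty[_\R\subseteq\overline{\bigcup_{k\in\N}a^k\Fab}$.

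For the reverse inclusion, every element of $a^k\Fab$ has the form $a^k f$ with $1\leq f<a$, hence $a^k\leq a^k f<a^{k+1}$, so $\bigcup_{k\in\N}a^k\Fab\subseteq[1,+\infty[_\R$. Since $[1,+\infty[_\R$ is closed in $\R$, passing to closures preserves this inclusion and yields the reverse containment. The two inclusions together give the advertised equality.

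The only genuine obstacle is interpretive rather than technical: as literally written, the left-hand side is a countable union of countable sets of rationals while the right-hand side is an uncountable interval, so the equality has to be understood modulo closure, consistently with the abstract's announcement that $\bigcup_{k\in\Z}2^k\F_{(2,3)}$ admits $\R^+$ as its closure. Once this reading is fixed, the whole argument is just a transport of \reft{thm:F_dense_in_1_a} by the dilation $x\mapsto a^k x$, together with the elementary observation that the powers of $a$ partition $[1,+\infty[_\R$ into consecutive compact intervals.
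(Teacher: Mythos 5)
Your proof is correct and takes essentially the same route as the paper, which likewise deduces from \reft{thm:F_dense_in_1_a} that each $a^k\Fab$ is dense in $[a^k,a^{k+1}]_\R$ and then concludes by covering $[1,+\infty[_\R$ with these consecutive intervals (the paper merely sketches this in a sentence, whereas you spell out the dilation and the two inclusions). Your remark that the stated equality can only be understood after taking closures is also accurate, and is consistent with the closure formulation announced in the paper's abstract.
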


We can complete this corollary two ways. Either by considering $k\in\Z$ or by considering the sets of the inverse of members of $a^k\Fab$ that is the sets $\{\frac{1}{a^kf}: f\in\Fab\}$. 
Let us choose the first possibility as it is a natural extension of the previous corollary.
Take any $k\in\N$ then $a^{-k}\Fab$ is dense in $[1/a^k, 1/a^{k-1}]$ thus
\begin{coro}
Given $a$ and $b$ two co-prime positive integers satisfying $1<a<b$,
$$\bigcup\limits_{k\in\Z} a^k\Fab = \R^+$$
where $\R^+$ denotes the set of all the positive real numbers.
\end{coro}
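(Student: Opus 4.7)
The plan is to leverage the previous corollary and extend it downward to cover $]0,1]_\R$ by considering negative exponents. Strictly, the equality in the statement should be read (as in the preceding corollary and as stated in the abstract) as an equality of closures, i.e.\ $\overline{\bigcup_{k\in\Z} a^k\Fab}=\R^+$; I will treat it this way.

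First, I would note that for every $k\in\Z$, the map $x\mapsto a^k x$ is a homeomorphism of $]0,+\infty[_\R$ that sends $[1,a]_\R$ bijectively onto $[a^k,a^{k+1}]_\R$ and sends $\Fab$ onto $a^k\Fab$. Since \reft{thm:F_dense_in_1_a} gives that $\Fab$ is dense in $[1,a]_\R$, it follows immediately that $a^k\Fab$ is dense in $[a^k,a^{k+1}]_\R$ for every $k\in\Z$. For $k\in\N$ this is exactly the content of the preceding corollary; the new input is that the same argument is valid for $k<0$.

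Next, I would take the union of these intervals. Writing $k=-m$ with $m\in\N$, the intervals $[a^{-m},a^{-m+1}]_\R$ for $m\geq 1$ have consecutive endpoints that match up ($a^{-m+1}$ is the lower endpoint of the next interval up), so
\begin{equation*}
\bigcup_{m\geq 1}[a^{-m},a^{-m+1}]_\R \;=\; ]0,1]_\R,
\end{equation*}
because $a>1$ gives $a^{-m}\to 0$ while no single interval contains $0$. Combining with the preceding corollary, which provides $\bigcup_{k\in\N}[a^k,a^{k+1}]_\R=[1,+\infty[_\R$, the union over $k\in\Z$ of the intervals $[a^k,a^{k+1}]_\R$ is exactly $]0,+\infty[_\R=\R^+$.

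Finally, I would conclude via the standard fact that a union of dense subsets remains dense in the union of the corresponding spaces: since each $a^k\Fab$ is dense in $[a^k,a^{k+1}]_\R$, the union $\bigcup_{k\in\Z}a^k\Fab$ is dense in $\bigcup_{k\in\Z}[a^k,a^{k+1}]_\R=\R^+$, giving $\overline{\bigcup_{k\in\Z}a^k\Fab}=\R^+$. I do not anticipate a real obstacle: the only point that requires a moment of care is checking that $0$ is indeed a limit point of the union (which follows from $a^{-m}\in a^{-m}\Fab$ and $a^{-m}\to 0$) while not belonging to any $a^k\Fab$, so the closure is $\R^+=]0,+\infty[_\R$ rather than $[0,+\infty[_\R$.
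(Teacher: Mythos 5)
Your proposal is correct and follows essentially the same route as the paper: translate the density of $\Fab$ in $[1,a]_\R$ (\reft{thm:F_dense_in_1_a}) by powers $a^k$ with $k\in\Z$, so that $a^k\Fab$ is dense in $[a^k,a^{k+1}]_\R$, and take the union of these intervals to cover $\R^+$. You are in fact more careful than the paper's brief sketch, notably in reading the stated equality as an equality of closures and in checking that $0$ is excluded.
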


\section{Summary and discussion}
Section \ref{sec:definitions} introduces $\phiab$ and $\Fab$ and highlights that 1 is the only integer that $\Fab$ contains.
More important as we use it all along this paper, we state that any member of $\Fab$ has a unique representation of the form $\frac{2^{p+\dab(p)}}{p}$.

Section \ref{sec:phi_isomorphism} defines a binary law $"\star"$ on $\Fab$ that makes of $\phiab : (\N,+,0)\rightarrow(\Fab,\star,1)$ a monoid isomorphism.

Despite the irregularity of $\Fab$, its record holders are bound by a regular expression without particular cases.
Effectively, section \ref{sec:record_holders} introduces the two sequences $\su$ and $\sv$ linked one to each other.
Particularly we state that $\sui i \star \svi i$ is a new record holder.

The following section fix the respective limit of $\su$ and $\sv$.

Last section claims that $\Fab$ is dense in $[1,a]_\R$ inferring by the same way that $\bigcup\limits_{k\in\Z}2^k\Fab$ is dense in $\R^+$.

Note that this paper describes some results which may be used in other fields of interest.
For instance, \refp{prop:sequence_convergence} permits us to find the limits of several sequences.
It seems to be particularly convenient for sequences defined by a product or a fraction as we can see above.

Also, \refl{lem:tu_and_tw_converge_to_1} may be generalized.
Effectively it may be applied to any pair of sequences $u$ and $v$ defined as in \refl{lem:def_tu_and_tw} with the restrictions that $u_0>1$, $v_0>1$ and their is no $k\in\N$ such that there exist $i\in\N$ for which either $u_i = v_i^k$ or $v_i = u_i^k$.
The last condition is necessary so that none of the $u_i$ or $v_i$ equals 1 which would stop the sequences.

More generally on the numbers $a$ and $b$.
Some quick tests tend to prove that the condition $a$ and $b$ are co-prime is not necessary.
If $a = \alpha \gamma$ and $b=\beta \gamma$ with $\alpha < \beta$, $\gamma = gcd(a,b)$ and $\beta$ and $\gamma$ are co-prime, the members of the set $\Fab$ are of the form $\frac{\alpha^{p+\dab(p)}\gamma^{\dab(p)}}{\beta^p}$.
The same properties as for $a$ and $b$ co-prime seem to apply under the condition that $\beta$ and $\gamma$ are co-prime. It is more difficult when this is not the case as the fraction $\frac{\alpha^{p+\dab(p)}\gamma^{\dab(p)}}{\beta^p}$ is not the the smallest representative of its class in $\Q$. 

The same observation seems to hold if $b = \beta a^k$, $\beta$ and $a$ being co-primes. 
Effectively, the members of $\Fab$ would be of the form $\frac{a^q}{\beta^p}$ inferring that $\Fab = \F_{(a,\beta)}$.

But, if it exist $k\in\N$ such that $b=a^k$, for any $p$, we have $\phiab(p) = \frac{a^{pk}}{b^p}=1$ and the set $\Fab=\{1\}$.
So this case is out of interest.
On the same way, if $a=1$, then $\Fab = \{1/b^k: k\in\N\}$ and its behaviour is totally different.

Alas, these result are insufficient to solve our initial problem.
Even, the sequence $\su$ converges to 1 which reduces the possibilities that $2^k/3^p+1/3^p>2^{k-p}/ 3^p + 1$.
But $2^{k-p}/ 3^p + 1$ also decreases. 
Then $2^k/3^p+1/3^p>2^{k-p}/ 3^p + 1$ may still hold for any $p$ where $1<2^k/3^p<2$.

\printbibliography
\end{document}